 \newtheorem{lemma}{LEMMA}[section]
 \newtheorem{proposition}[lemma]{PROPOSITION}
 \newtheorem{corollary}[lemma]{COROLLARY}
 \newtheorem{theorem}[lemma]{THEOREM}
 \newtheorem{remark}[lemma]{REMARK}
\newcommand{\real}{\mathbbm{R}}
\newcommand{\nat}{\mathbbm{N}}
\renewcommand{\a}{\alpha}
\renewcommand{\b}{\beta}
\newcommand{\g}{\gamma}
\newcommand{\vp}{\varphi}
\newcommand{\ve}{\varepsilon}
\newcommand{\lam}{\lambda}
\newcommand{\reald}{{\real^d}}
\newcommand{\es}{E_{\mathbbm P}}
\newcommand{\on}{\quad\text{ on }}
\newcommand{\und}{\quad\mbox{ and }\quad}
\newcommand{\inv}{^{-1}}
\newcommand{\ov}{\overline}
\newcommand{\C}{\mathcal C}
\newcommand{\B}{\mathcal B}
\newcommand{\uc}{{U^c}}
\title{Unavoidable collections of balls for processes\\
        with isotropic unimodal Green function} 
\author{Wolfhard Hansen\\  
{\small Department of Mathematics, University of Bielefeld}\\
{\small 33501 Bielefeld, Germany}\\
{\small hansen$@$math.uni-bielefeld.de}}
\date{}
\begin{document}

\maketitle

\begin{abstract}
Let us suppose that we have a~right continuous Markov semigroup on~$\mathbbm R^d$, $d\ge 1$,
such that its potential kernel is given by convolution with a~function \hbox{$G_0=g(|\cdot|)$},
where $g$ is decreasing,  has a~mild lower decay property at zero, and a~very weak decay property
at infinity. 
This captures not only the Brownian semigroup (classical potential theory) 
and  isotropic $\alpha$-stable semigroups (Riesz potentials), but also more general isotropic  
L\'evy processes, where the characteristic function has a~certain lower scaling property,
and various geometric stable processes.

There always exists a~corresponding Hunt process. A~subset~$A$
of~$\reald$ is called unavoidable, if the process hits~$A$ with probability~$1$, wherever it starts.
It is known that, for any locally finite union of pairwise disjoint balls $B(z,r_z)$, $z\in Z$, which is unavoidable,
$\sum_{z\in Z} g(|z|)/g(r_z)=\infty$. The converse is proven assuming, in~addition, that,
for some $\varepsilon>0$, $|z-z'|\ge \varepsilon |z| (g(|z|)/g(r_z))^{1/d}$, whenever $z,z'\in Z$, $z\ne z'$.
It also holds, if the balls are regularly located, that is,  if their centers keep some minimal mutual  distance,
each ball of a~certain size intersects $Z$, and $r_z=g(\phi(|z|))$, where $\phi$
is a~decreasing function.

The  results generalize and, exploiting a~zero-one law,  simplify recent work by A.\,Mimica and  Z.\,Vondra\v cek.
\end{abstract}

\section{Introduction and main results}

Let $\mathbbm P=(P_t)_{t>0} $ be a~right continuous Markov semigroup on $\reald$, $d\ge 1$, 
such that its potential kernel $V_0:=\int_0^\infty P_t\,dt$ is given by convolution with a~$\mathbbm P$-excessive function 
\[
              G_0=g(|\cdot|),
\]
where $g$ is  a~decreasing   function on $\left[0,\infty\right)$ such that  
$0<g<\infty$ on $(0,\infty)$,  $\lim_{r\to 0} g(r)=g(0)=\infty$ and the following holds:
\begin{itemize} 
\item[\rm (LD)] \emph{Lower decay property}:
 There are $R_0\ge 0$ and $C_G\ge 1$ such that
\begin{equation}\label{g-decay}
       d     \int_0^r  s^{d-1}g(s) \,ds \le C_G \, r^d g(r), \qquad\mbox{  for all~$r>R_0$}.
\end{equation} 
\item[\rm (UD) ] \emph{Upper decay property at infinity}: 
There are $R_1\ge 0$, $\eta\in (0,1) $, and $K>1$ such that
\begin{equation}\label{ucd}
          g(Kr)\le \eta g(r), \qquad \mbox{  for all $r> R_1$}.
\end{equation} 
\end{itemize} 

\begin{remark}\label{crucial} {\rm
   The inequality (\ref{g-decay}) has a~very intuitive meaning: If $B$ is a~ball with radius $r$ and center $0$, 
and $\lambda_B$ denotes normed Lebesgue measure on~$B$, then the potential 
$G\lambda_B:=G_0\ast \lambda_B$ of $\lambda_B$ satisfies  $G\lambda_B(0)  \le C_G g(r)$
(and hence $G\lambda_B \le   C_G g(r)$ on $\reald$), where $g(r)$ is the value of $G_0$ at the boundary of $B$.

In Section \ref{exa} 
we shall see the following.

1. For the Brownian semigroup (classical potential theory)
and isotropic $\a$-stable semigroups (Riesz potentials)  we have $g(r)=r^{\a-d}$,   $\a\in (0,2]$, $\a<d$,
and our assumptions are satisfied with $R_0=R_1=0$. This holds as well for the more general
isotropic unimodal L\'evy semigroups considered in~\cite{mimica-vondracek}.

2.  If (LD) is satisfied for some $R_0>0$, then, for \emph{every} $R>0$, there exists 
$C_G\ge 1$ such that (\ref{g-decay}) holds for all $r>R$ (and hence the
restriction $r_z>R_0$, for all $z\in Z$, imposed below,  reduces to the requirement 
that $\inf_{z\in Z} r_z>0$). 
Analogously for (UD).

3.  If $\int_0^1 s^{d-1} g(s)\, ds<\infty$ and $g(r)\approx r^{\b-d}$, $0<\b<d$, 
 as~$r\to \infty$,  then (LD) and (UD) hold with arbitrary $R_0, R_1\in (0,\infty)$.

Subordinate Brownian semigroups   with  subordinators
having Laplace exponents of the form 
\[
\phi(\lambda)=
  \ln^\delta(1+\lambda^{\a/2}), \qquad\mbox{ $ 0<\delta\le 1$,  $\a\in (0,2]$, $\a<d$}, 
\]
provide examples (symmetric geometric stable processes, if $\delta=1$), 
where (LD) does not hold with $R_0=0$.  Here $g$ satisfies
\[
       g(r)\approx r^{-d}\ln^{-(1+\delta)} (1/r) \mbox{ as } r\to 0 \und
       g(r)\approx r^{\delta\a-d}         \mbox{ as } r\to \infty.
\]
}
\end{remark}

Let $\mathfrak X=(\Omega,\mathfrak M, \mathfrak M_t, X_t, \theta_t, P^x)$
be an associated  Hunt process on $\reald$ (for its existence see Remark \ref{ex-process},1). 
A~Borel measurable set $A$ in $\reald$  is called \emph{unavoidable},
if
\[
       P^x[T_A<\infty]=1 \qquad\mbox{ for every } x\in\reald,
\]
where  $T_A(\omega):=\inf \{t\ge 0\colon X_t(\omega)\in A\}$.
Otherwise, it is called \emph{avoidable}, that is, $A$~is avoidable, if there exists $x\in\reald$
such that $P^x[T_A<\infty]<1$. 

For all $x\in \reald$ and $r>0$, let $B(x,r)$ denote
the open ball with center $x$ and radius $r$, and let $\ov B(x,r)$ be
its closure. 
Let us introduce two properties for families of balls which,  in the the classical case, have
already  been considered in~\cite{carroll,gardiner-ghergu} 
(and where it does not make a~real difference, if we look at open or closed balls, since a~union of open balls
is unavoidable if~and only if the union of the corresponding closed balls is unavoidable;
see Remark~\ref{ex-process},2). 

Let $Z$ be a~countable set in $\reald\setminus \{0\}$ and $r_z>R_0$, $z\in Z$,
such that the balls $B(z,r_z)$ are pairwise disjoint.
We say that the balls $B(z,r_z)$, $z\in Z$, satisfy the \emph{separation condition}, if
$Z$ is locally finite and 
\begin{equation}\label{separation}
       \inf\nolimits_{z,z'\in Z,\, z\ne z'}   \frac  {|z-z'|^d}{|z|^d}        \, \frac {g(r_z)} {g(|z|)} >0.
\end{equation} 
We say that they  are \emph{regularly located}, if the following holds:
\begin{itemize}
 \item[\rm (a)] There exists $\ve>0$ such that $|z-z'|\ge \ve$, for all $z,z'\in Z$, $z\ne z'$.
 \item[\rm (b)] There exists  $R>0$ such that $B(x,R)\cap Z\ne \emptyset$, for every $x\in \reald$.
 \item[\rm (c)] There exists a~decreasing function $\phi\colon [0,\infty)\to (0,\infty)$ such that $r_z=\phi(|z|)$.
 \end{itemize} 

 Our main results are the following 
(where we might bear in our mind that $1/g(r)$ is approximately the capacity of balls having radius $r$,
that is, the total mass of their equilibrium measure; see Proposition \ref{cap}).

\begin{theorem}\label{main} 
If the balls $B(z,r_z)$, $z\in Z$, satisfy the separation condition, 
then their  union $A$  is unavoidable provided
\[
\sum\nolimits_{z\in Z} \frac{g(|z|)} {g(r_z)}=\infty.
\]
\end{theorem}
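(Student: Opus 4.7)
By Proposition~\ref{cap}, each closed ball $\ov B(z,r_z)$ carries an equilibrium measure $\mu_z$ of total mass $\asymp 1/g(r_z)$, whose potential equals the hitting probability, $G\mu_z(x)=P^x[T_{\ov B(z,r_z)}<\infty]$. Combining the upper bound provided by Remark~\ref{crucial} with the obvious one-sided lower bound coming from the monotonicity of~$g$ (and the decay properties) yields the two-sided estimate $G\mu_z(x)\asymp g(|x-z|)/g(r_z)$ whenever $|x-z|$ is sufficiently larger than~$r_z$. Fixing the starting point~$0$ and using (UD) to replace $g(|0-z|)$ by $g(|z|)$,
\[
\sum_{z\in Z} P^{0}[T_{B(z,r_z)}<\infty]\;\asymp\;\sum_{z\in Z}\frac{g(|z|)}{g(r_z)}\;=\;\infty,
\]
which places us in the classical divergent-Borel--Cantelli situation for the events $E_z:=\{T_{B(z,r_z)}<\infty\}$.

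The core step is a second-moment (Paley--Zygmund) estimate on $N_F:=\sum_{z\in F}\mathbbm{1}_{E_z}$ for finite $F\subset Z$. Applying the strong Markov property at the smaller of the two entrance times and bounding the subsequent hitting probability via the pointwise estimate for $G\mu_{z'}$ yields
\[
P^{0}(E_z\cap E_{z'})\;\le\;
C\Bigl(\frac{g(|z|)}{g(r_z)}\cdot\frac{g(|z-z'|)}{g(r_{z'})}+\frac{g(|z'|)}{g(r_{z'})}\cdot\frac{g(|z-z'|)}{g(r_z)}\Bigr).
\]
To deduce $E^{0} N_F^{2}\le C(E^{0} N_F)^{2}$ uniformly in $F$, it is enough to show, uniformly in $z\in Z$,
\[
\sum_{z'\in Z,\,z'\ne z}\frac{g(|z-z'|)}{g(r_{z'})}\;\le\;C\sum_{z'\in Z}\frac{g(|z'|)}{g(r_{z'})}.
\]
Here (\ref{separation}) is tailor-made: the \emph{effective radius} $\rho_z:=|z|\bigl(g(|z|)/g(r_z)\bigr)^{1/d}$ forces an $\varepsilon\rho_z$-packing of those centres $z'$ with $|z'|\asymp|z|$, so comparing the sum over such $z'$ with a volume integral and invoking the lower decay property (LD) converts their contribution into a constant multiple of $g(|z|)/g(r_z)$; the terms with $|z'|$ far from~$|z|$ are absorbed from the monotonicity of~$g$ combined with (UD).

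Paley--Zygmund then gives $P^{0}[N_F\ge 1]\ge c>0$ uniformly in $F$, and letting $F\uparrow Z$ yields $P^{0}[T_A<\infty]\ge c>0$. To upgrade this positivity at the single point~$0$ to $P^x[T_A<\infty]=1$ for every~$x$ I~would invoke the zero-one law announced in the abstract: under the isotropy and Green-function hypotheses the function $x\mapsto P^x[T_A<\infty]$ should be either identically~$0$ or identically~$1$, so strict positivity at one point forces value~$1$ everywhere. The main technical obstacle is the combinatorial $L^2$ bound above, in which the precise geometric form of~(\ref{separation}), the potential estimate for $G\mu_z$, and the lower decay property~(LD) must combine exactly so as to absorb the off-diagonal cross-terms; this is presumably where the present proof is cleaner than the argument in \cite{mimica-vondracek}, since the zero-one law removes the need to obtain $P^{x}[T_A<\infty]=1$ by direct potential-theoretic means.
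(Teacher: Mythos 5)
Your plan takes a genuinely different route from the paper. You set up a second-moment (Paley--Zygmund) argument for the variables $\mathbbm{1}_{E_z}$, the classical champagne-bubble technique of \cite{carroll,gardiner-ghergu}. The paper's proof of Theorem~\ref{main} exploits the zero-one law in a much sharper way: Proposition~\ref{so-simple} provides, for any avoidable $A$ and any $\delta>0$, a starting point $x_0$ with $P^{x_0}[T_A<\infty]<\delta$. After translating so that $x_0=0$, a \emph{first-moment} argument then closes the proof: $Z$ is split into $m$ subfamilies $Z_j$ indexed by shells $S(3^{nm+j}R)$; for each shell Lemma~\ref{shell-1}, Proposition~\ref{hame} and the strong Markov property produce pairwise disjoint events inside $[T_A<\infty]$ whose probabilities dominate a fixed multiple of $\sum_{z\in Z\cap S(\rho)}g(|z|)/g(r_z)$. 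Since $P^0[T_A<\infty]$ was already forced to be small, every partial sum is bounded, so the whole series converges. This is exactly what makes the argument shorter than in \cite{mimica-vondracek}: no off-diagonal cross-term estimate is needed at all.

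There are two genuine gaps in your sketch. The central one is the uniform bound $\sum_{z'\ne z} g(|z-z'|)/g(r_{z'}) \le C\sum_{z'} g(|z'|)/g(r_{z'})$. This is plausible given (\ref{separation}) and (LD), but it is the entire difficulty of the second-moment route; the case $|z'|\asymp|z|$ requires a dyadic/packing estimate in which the effective radii $\rho_{z'}=|z'|(g(|z'|)/g(r_{z'}))^{1/d}$ vary from point to point, and the paper deliberately avoids having to prove such an estimate. A sketch of the form ``compare with a volume integral and invoke (LD)'' is not a proof here. The second gap is in your use of the zero-one law. Proposition~\ref{so-simple} gives the dichotomy ``$R_1^A\equiv 1$ or $\inf_{x}R_1^A(x)=0$'', \emph{not} ``$R_1^A$ is identically $0$ or identically $1$''; $R_1^A$ routinely takes intermediate values. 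Hence strict positivity at the single point $0$ does not, by itself, force $R_1^A\equiv 1$. To repair this you would have to argue that the Paley--Zygmund lower bound holds uniformly over all starting points (or, equivalently, to derive a contradiction from a near-infimum starting point), which requires checking that passing to a tail of $Z$ and translating keeps the divergence of the series, the separation condition, and the two-sided hitting estimate with uniform constants.
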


\begin{corollary}\label{main-corollary}
Suppose that the balls $B(z,r_z)$, $z\in Z$,  are regularly located.
Then their union $A$ is unavoidable if and only if 
\[
\int_1^\infty \frac {r^{d-1} g(r)}{g(\phi(r))}\,dr =\infty.
\]
\end{corollary}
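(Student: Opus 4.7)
The plan is to reduce Corollary~\ref{main-corollary} to Theorem~\ref{main} and to the necessary condition recalled in the abstract ($A$ unavoidable $\Rightarrow\sum_{z\in Z}g(|z|)/g(r_z)=\infty$), by first turning the integral criterion into its sum counterpart. Regular location forces $|Z\cap\{r\le|x|<r+1\}|\asymp r^{d-1}$ for $r$ large (upper bound from~(a), lower bound from~(b)), and the function $h(r):=g(r)/g(\phi(r))$ is decreasing since $g$ is decreasing and $g\circ\phi$ is increasing. A monotone Riemann-sum comparison on unit shells then yields
\[
\sum\nolimits_{z\in Z}\frac{g(|z|)}{g(\phi(|z|))}\ \asymp\ \int_1^\infty\frac{r^{d-1}g(r)}{g(\phi(r))}\,dr.
\]
Combined with the necessary condition from the abstract, this immediately settles the direction \emph{$A$ unavoidable $\Rightarrow$ integral diverges}.

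For the converse, Theorem~\ref{main} cannot be applied to $Z$ itself, because the separation condition~(\ref{separation}) may fail: already in the isotropic $\a$-stable case with constant radius $r_z=c$ one has $|z-z'|^d g(c)/(|z|^d g(|z|))\asymp|z-z'|^d/|z|^{\a}\to 0$. Assuming the integral diverges, I will instead extract a subset $Z'\subset Z$ whose balls $B(z,r_z)$, $z\in Z'$, satisfy the separation condition and still have $\sum_{z\in Z'}g(|z|)/g(\phi(|z|))=\infty$; Theorem~\ref{main} applied to $Z'$, together with the monotone inclusion $\bigcup_{z\in Z'}B(z,r_z)\subset A$, then delivers unavoidability of~$A$.

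The thinning operates at the scale $\sigma(r):=r\bigl(g(r)/g(\phi(r))\bigr)^{1/d}$, which is precisely the spacing at which (\ref{separation}) is realised. Decompose $\reald\setminus\{0\}$ into shells $\{K^n\le|x|<K^{n+1}\}$ with $K$ large enough to leave a safety gap between non-adjacent shells, and in each shell keep a maximal subset of~$Z$ with pairwise distances at least $\max(\ve,C\sigma(K^n))$. By (a) and (b), the $n$-th shell contributes of the order $K^{nd}/\max(\ve,C\sigma(K^n))^d$ selected points, each of weight $\asymp g(K^n)/g(\phi(K^n))$, so its contribution to $\sum_{z\in Z'}g(|z|)/g(\phi(|z|))$ is comparable to
\[
\min\!\bigl(1,\ \ve^{-d}K^{nd}g(K^n)/g(\phi(K^n))\bigr).
\]
A case split on whether $\sigma(K^n)>\ve$ infinitely or only finitely often shows that divergence of the integral forces this series to diverge, providing the required summability.

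The main obstacle is the global verification of~(\ref{separation}) for~$Z'$. Intra-shell pairs are handled directly by the local net; inter-shell pairs require combining the gap built into the shell decomposition (choice of~$K$) with the ``almost monotonicity'' $r_1^d g(r_1)\le C_G\,r_2^d g(r_2)$ for $r_1\le r_2$ coming from~(LD), in order to conclude that the distance $|z-z'|$ always dominates a uniform constant multiple of $\sigma(|z|)$. Once this is established, Theorem~\ref{main} applied to~$Z'$ completes the proof.
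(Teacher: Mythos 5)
Your first half (necessity via Proposition~\ref{known} and the equivalence of $\sum_{z\in Z}g(|z|)/g(r_z)=\infty$ with divergence of the integral) is fine and matches the paper (Remark~\ref{remark-main},3). The sufficiency direction, however, has two genuine gaps. First, the bookkeeping of the thinned sum is not justified: inside a multiplicative shell $\{K^n\le|x|<K^{n+1}\}$ you claim each selected point has weight $\asymp g(K^n)/g(\phi(K^n))$, but only the upper bound holds. The function $\phi$ is merely decreasing, so $g\circ\phi$ may jump by an arbitrarily large factor just after $K^n$, making $g(|z|)/g(\phi(|z|))$ for most points of the shell far smaller than $g(K^n)/g(\phi(K^n))$. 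Consequently your per-shell contribution ``$\asymp\min\bigl(1,\ve^{-d}K^{nd}g(K^n)/g(\phi(K^n))\bigr)$'' is only an overestimate, and the concluding case split (whether $\sigma(K^n)>\ve$ infinitely often) does not show that divergence of the integral forces divergence of $\sum_{z\in Z'}g(|z|)/g(r_z)$; with honest (right-endpoint) weights the shells with large $\sigma(K^n)$ may contribute only $\asymp g(K^{n+1}\cdot\phi\text{-ratio})$-type terms that can be summable, and rescuing the argument requires extra input (e.g.\ that monotonicity of $\phi$ forces $\sigma$ to decay at most geometrically below any shell where it is large). Second, the separation condition~(\ref{separation}) for $Z'$ is not verified across \emph{adjacent} shells: your selection imposes distance constraints only within each shell, and no choice of $K$ creates a gap between consecutive shells, so two selected points on opposite sides of the sphere $|x|=K^{n+1}$ may be at distance $\ve$ while $|z|\bigl(g(|z|)/g(r_z)\bigr)^{1/d}$ there is large; the almost-monotonicity $r_1^dg(r_1)\le C_G\,r_2^dg(r_2)$ does not help with this. (This is repairable, e.g.\ by splitting the shells into residue classes as the paper does inside the proof of Theorem~\ref{main}, but it is missing.)

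It is worth comparing with the paper's route, which avoids both difficulties. The paper splits on $\limsup_{\rho\to\infty}\rho^dg(\rho)/g(\phi(\rho))$. If this is finite, then $|z|^dg(|z|)/g(r_z)$ is bounded over $Z$, so the mutual distance $\ge\ve$ already gives~(\ref{separation}) for \emph{all} of $Z$ and Theorem~\ref{main} applies with no thinning. If it is infinite (Proposition~\ref{MV-L4.2}), unavoidability is proved directly: for each $x$ one picks a large $\rho$ with $\rho^dg(\rho)/g(\phi(\rho))$ large, covers the shell $\ov B(0,\rho/2)\setminus B(0,\rho/4)$ by $\asymp(\rho/R)^d$ balls $B(z_j,\phi(\rho))\subset A$ around points of $Z$, and uses Lemma~\ref{comparison} with the minimum principle to get a lower bound on $R_1^A(x)$ that is uniform in $x$; the zero-one law (Proposition~\ref{so-simple}) then yields $R_1^A\equiv1$, with no divergence of any sum needed in this case. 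Note that the regime where your accounting is weakest (unbounded $\rho^dg(\rho)/g(\phi(\rho))$, e.g.\ your own example of constant radii for the $\a$-stable process) is exactly the regime the paper handles by this separate direct argument, while in the complementary regime no thinning is necessary at all. So as written your proposal does not close the sufficiency direction; either supply the missing quasi-additivity/separation repairs, or adopt a dichotomy of the above type.
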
 

The converse in Theorem \ref{main} is already known without any restriction on the balls 
and assuming only $\lim_{r\to\infty} g(r)=0$ instead of (UD) (see  \cite[Theorem 6.8]{HN-unavoidable};
the inequality $R_1^{\ov B(z,r_z)}\le g(|z|)/g(r_z)$, which is used in its proof, holds trivially, since $g$ is decreasing).

\begin{proposition}\label{known}
Let $A$ be an unavoidable union of  
balls  $B(z,r_z)$, $z\in Z$. 
Then $\sum_{z\in Z} g(|z|)/g(r_z)=\infty$ and $\sum_{z\in Z} 1/g(r_z)=\infty$.
\end{proposition}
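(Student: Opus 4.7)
The first assertion $\sum_{z\in Z}g(|z|)/g(r_z)=\infty$ is already established as \cite[Theorem~6.8]{HN-unavoidable}; I would simply invoke it and focus on the second. For $\sum_{z\in Z}1/g(r_z)=\infty$ I would argue by contradiction. Suppose $S:=\sum_{z\in Z}1/g(r_z)<\infty$ and let $\mu_z$ be the equilibrium measure of $\ov B(z,r_z)$; by Proposition~\ref{cap}, $|\mu_z|=\kap(\ov B(z,r_z))\le C/g(r_z)$, and $G\mu_z=R_1^{\ov B(z,r_z)}$. The measure $\mu:=\sum_{z\in Z}\mu_z$ is therefore finite, with $|\mu|\le CS<\infty$, and subadditivity of reduced functions yields
\[
G\mu=\sum_{z\in Z}R_1^{\ov B(z,r_z)}\ge R_1^A.
\]
Unavoidability of $A$ forces $R_1^A\equiv 1$, hence $G\mu\ge 1$ on all of $\reald$.

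To rule this out I would integrate $G\mu\ge 1$ over $B(0,R)$ and apply Fubini:
\[
|B(0,R)|\le\int_{B(0,R)}G\mu(x)\,dx=\int h_R(y)\,d\mu(y),\qquad h_R(y):=\int_{B(0,R)}g(|x-y|)\,dx.
\]
For $|y|\le 2R$ the translated ball $B(-y,R)$ is contained in $B(0,3R)$, and (LD) gives $h_R(y)\le C_1 R^d g(R)$; for $|y|>2R$ one has $|x-y|\ge |y|/2$ on $B(0,R)$, hence $h_R(y)\le |B(0,R)|\,g(|y|/2)$. Dividing by $R^d$ one obtains
\[
c_d\le C_1 g(R)|\mu|+c_d\int_{|y|>2R}g(|y|/2)\,d\mu(y),\qquad c_d:=|B(0,1)|.
\]
As $R\to\infty$ the first term vanishes by (UD) (which forces $g(R)\to 0$) and the second by dominated convergence ($|\mu|<\infty$ and the integrand tends pointwise to $0$, bounded by $g(1)$ on $\{|y|>2\}$), contradicting $c_d>0$.

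The main obstacle is the potentially unbounded support of $\mu$: the splitting at $|y|=2R$ is unavoidable since a uniform bound on $h_R(y)$ in $y$ is not available when $\mu$ has mass at infinity. The decisive input is Proposition~\ref{cap}, which converts the arithmetic assumption $\sum 1/g(r_z)<\infty$ into the potential-theoretic statement that $R_1^A$ has a finite-mass majorant; from there the interplay of (LD) (to estimate the local part of the integral) and (UD) (to kill the tail) completes the argument.
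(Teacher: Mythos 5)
Your argument is correct, but the paper treats this proposition entirely by citation: both claims are contained in \cite[Theorem~6.8]{HN-unavoidable}, and the paragraph preceding the statement merely notes that the inequality $R_1^{\ov B(z,r_z)}(0)\le g(|z|)/g(r_z)$ used in that proof holds here because $g$ is decreasing (so the cited result applies assuming only $\lim_{r\to\infty}g(r)=0$, not the full (UD)). You invoke the same reference for the first sum but then supply a self-contained argument for the second; this is partly redundant, since the reference already gives $\sum 1/g(r_z)=\infty$, but it is a valid and instructive alternative. Your route --- equilibrium measures $\mu_z$, Proposition~\ref{cap} to get $\|\mu\|\le c\sum 1/g(r_z)<\infty$, countable subadditivity to get $G\mu\ge R_1^A\equiv 1$, then Fubini over $B(0,R)$ and $R\to\infty$ --- is sound. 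Two small remarks. First, contrary to your closing comment, the splitting at $|y|=2R$ and the dominated-convergence step are not needed: for $|y|>2R$ one has $h_R(y)\le |B(0,R)|\,g(|y|/2)\le c_d R^d g(R)$ since $g$ is decreasing, so $h_R\le C R^d g(R)$ holds \emph{uniformly} in $y$ and $c_d\le C\,g(R)\,\|\mu\|\to 0$ immediately. Second, your bound on $\int_{B(0,3R)}g$ uses (LD), which the cited result avoids; this is harmless under the paper's standing hypotheses but makes your proof slightly less general than the one being replaced.
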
 

\begin{remark}\label{remark-main} {\rm
1.  In the classical case, Theorem \ref{main}  is \cite[Theorem 6]{gardiner-ghergu} (for unavoidableness  under
a~ weaker separation property see \cite{carroll}) and Corollary~\ref{main-corollary} is  \cite[Theorem~2]{carroll}.

2.  In the more general  case of isotropic unimodal  L\'evy processes, where the characteristic function satisfies
a~lower scaling condition (and (LD), (UD) hold with  $R_0=R_1=0$), both Theorem \ref{main}, its converse, 
and Corollary~\ref{main-corollary} are proven
in~{\cite{mimica-vondracek}}. We shall use the same method of considering finitely many countable 
unions of concentric shells,
but have to overcome additional difficulties caused by having only a~rather weak estimate
for  the exit distribution of balls (compare \cite[Lemma~2.2]{mimica-vondracek},
 going back to \cite[Corollary~2]{grzywny}, and
Proposition~\ref{hame}).
Nevertheless our proof for Theorem~\ref{main} can be simpler, since starting with an avoidable union $A$
and an arbitrary $\delta>0$, we may assume without loss of generality
that $P^0[T_A<\infty]< \delta$ (using  Proposition~\ref{so-simple}   and translation invariance).

3.  If the balls $B(z,r_z)$, $z\in Z$, are regularly located, then  
\[
    \sum\nolimits_{z\in Z} \frac{g(|z|)} {g(r_z)}=\infty \quad
    \mbox{ if and only if } \quad \int_1^\infty \frac {r^{d-1} g(r)}{g(\phi(r))}\,dr =\infty.
\]
This is fairly obvious (see \cite[Lemma~4.1]{mimica-vondracek}) and allows us
to reduce Corollary \ref{main-corollary} to a~consequence of Theorem \ref{main} by first 
treating a~simple case (see Proposition \ref{MV-L4.2}).   
}
\end{remark}

In view of the second statement in Proposition \ref{known} let us mention the following part
 of \cite[Theorem 6.8]{HN-unavoidable} (where only $\lim_{r\to \infty} g(r)=0$ 
instead of~(UD) is needed).
See also~\cite{HN-champagne} 
for the result in classical potential theory.

\begin{theorem}\label{iso} Suppose that {\rm(LD)} holds with $R_0=0$. 
Let  $h\colon (0,1)\to (0,1)$ with   $\lim_{t\to 0} h(t)=0$,  let  $\vp\in \C(\reald)$, $\vp >0$,
and $\delta>0$. 
Then  there exist a~locally finite set $Z$ in $\reald$ and $0<r_z<\vp(z)$,
 $z\in Z$,  
such that the  balls $\ov B(z,r_z)$ are pairwise disjoint,
the union  of all~$\ov B(z,r_z)$ is unavoidable, and 
\[
\sum\nolimits_{z\in Z}  h(r_z)/g(r_z)<\delta.
\]
\end{theorem}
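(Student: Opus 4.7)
The plan is to construct $A$ as a disjoint union of balls arranged shell by shell, and then invoke Theorem~\ref{main} to obtain unavoidability. In each shell a large family of balls of a common, very small radius~$r_n$ will be placed. The separation condition~\eqref{separation} forces a minimum spacing of the centers and hence an upper bound on how many balls fit in each shell; nevertheless, the resulting packing density is still large enough that each shell's contribution to $\sum_z g(|z|)/g(r_z)$ is bounded \emph{below} by a positive constant, forcing divergence. Simultaneously, since $h(r_n)\to 0$ while $g(r_n)\to\infty$ as $r_n\to 0$, the per-shell contribution to $\sum_z h(r_z)/g(r_z)$ can be made geometrically small.

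For the shells I define $a_0<a_1<a_2<\ldots$ implicitly by $g(a_n)=2^{-n}g(a_0)$, so that $g(a_n)/g(a_{n+1})=2$ is fixed; by~(UD) the ratios $a_{n+1}/a_n$ are then bounded above. In each $S_n:=\{x\colon a_n\le|x|<a_{n+1}\}$, pick $r_n\in(0,1)$ so small that: (i)~$r_n<\min\{\vp(x)\colon|x|\le a_{n+1}\}$; (ii)~$(2r_n)^d g(r_n)<C^d a_{n+1}^d g(a_{n+1})$, where $C>0$ is a small absolute constant; (iii)~$h(r_n)<\delta'\cdot 2^{-n} g(a_n)$ for an appropriate $\delta'>0$ depending on $\delta$, $C$, $d$; and (iv)~$g(r_n)/g(a_n)\ge M_n$, with $M_n$ large enough to guarantee the between-shell part of the separation condition. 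All four requirements are simultaneously achievable for small~$r_n$ because $s^d g(s)\to 0$ and $h(s)\to 0$ as $s\to 0$, while $g(s)\to\infty$. Then let $Z_n\subset S_n$ be a maximal $\ve_n$-separated set with
\[
\ve_n:=C a_{n+1}\bigl(g(a_{n+1})/g(r_n)\bigr)^{1/d},
\]
and put $Z:=\bigcup_n Z_n$. By~(ii), $\ve_n>2r_n$, so the closed balls $\ov B(z,r_n)$, $z\in Z_n$, are pairwise disjoint.

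The within-shell part of~\eqref{separation} is satisfied with constant $\ge C^d g(a_{n+1})/g(a_n)=C^d/2$. For the between-shell part, if $z\in S_n$ and $z'\in S_m$ with $m>n$, then $|z-z'|\ge a_m-a_{n+1}$ and $|z|\le a_{n+1}$; combined with~(iv), this yields a uniform positive lower bound, provided $M_n$ dominates the worst case $m=n+1$. Using the packing estimate $|Z_n|\gtrsim\mathrm{vol}(S_n)/\ve_n^d\gtrsim(1-(a_n/a_{n+1})^d)\,g(r_n)/(C^d g(a_{n+1}))$, condition~(iii) yields $\sum_n|Z_n|h(r_n)/g(r_n)<\delta$; meanwhile
\[
\sum_{z\in Z_n}\frac{g(|z|)}{g(r_n)}\ge|Z_n|\,\frac{g(a_{n+1})}{g(r_n)}\gtrsim\frac{1-(a_n/a_{n+1})^d}{C^d}.
\]
Since $a_{n+1}/a_n$ is bounded, $1-(a_n/a_{n+1})^d$ is comparable to $\log(a_{n+1}/a_n)$, and $\sum_n\log(a_{n+1}/a_n)=\lim_n\log(a_n/a_0)=\infty$. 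Theorem~\ref{main} then gives the unavoidability of $A:=\bigcup_z\ov B(z,r_z)$.

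The main obstacle is the choice of shell scale so that $g(a_{n+1})/g(a_n)$ is bounded \emph{below}: a naive choice $a_n=K^n$ would fail, because (UD) controls this ratio only from above, and the within-shell separation could then degenerate. Defining~$a_n$ implicitly through~$g$ itself (using generalized inverses if $g$ is not strictly decreasing) resolves this, and it is the one step where the specific behavior of~$g$ enters in a nontrivial way.
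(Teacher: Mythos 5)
Your overall strategy---an explicit shell-by-shell packing of small balls of common radius $r_n$, followed by an appeal to Theorem~\ref{main}---is a legitimate route: the paper itself does not prove Theorem~\ref{iso} but quotes it from \cite[Theorem 6.8]{HN-unavoidable} (note, though, that your argument uses (UD) through Theorem~\ref{main} and through the bound on $a_{n+1}/a_n$, so it does not recover the cited version, which needs only $\lim_{r\to\infty}g(r)=0$). The choice of $\ve_n$, the counting of $|Z_n|$, the within-shell verification of (\ref{separation}) with constant $C^d g(a_{n+1})/g(a_n)$, and the treatment of the two sums (divergence of $\sum g(|z|)/g(r_z)$ via $\sum\log(a_{n+1}/a_n)$, smallness of $\sum h(r_z)/g(r_z)$ via (iii)) are all sound.

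There is, however, a genuine gap at the interface of consecutive shells. Your shells $S_n=\{a_n\le|x|<a_{n+1}\}$ touch, so for $z\in Z_n$, $z'\in Z_{n+1}$ the only distance bound you invoke, $|z-z'|\ge a_m-a_{n+1}$, is $0$ when $m=n+1$: nothing prevents points of $Z_n$ and $Z_{n+1}$ from lying arbitrarily close to the sphere $|x|=a_{n+1}$ on either side. For such pairs neither the pairwise disjointness of the closed balls (required both in the statement and in Theorem~\ref{main}) nor the separation condition (\ref{separation}) has been verified, and no choice of $M_n$ in (iv) can repair this, because largeness of $g(r_n)/g(a_n)$ cannot compensate a distance that may be arbitrarily small. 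The repair is easy and stays inside your scheme: after fixing the shells, choose $r_n$ so small that $\ve_n$ is much smaller than $a_{n+1}-a_n$, and take $Z_n$ to be a maximal $\ve_n$-separated subset of $\{x\in S_n\colon \dist(x,\partial S_n)\ge\ve_n\}$. Then any $z\in Z_n$, $z'\in Z_m$ with $m\ne n$ satisfy $|z-z'|\ge\max\{\ve_n,\ve_m\}>r_n+r_m$, so all closed balls are disjoint, and the cross-shell separation quantity is bounded below by the same constant $C^d g(a_{n+1})/g(a_n)$ as the within-shell one (making (iv) unnecessary), while the volume lost to the margin is negligible once $r_n$ is small, so your lower bound on $|Z_n|$ survives. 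A minor further point: since $g$ need not be continuous, $g(a_n)=2^{-n}g(a_0)$ may have no solution, and a generalized inverse only handles flat pieces, not jumps; here the doubling property (\ref{g-doubling}) bounds the jumps of $g$, so a level-set definition of $a_n$ still keeps $g(a_n)/g(a_{n+1})$ bounded above and $a_{n+1}/a_n$ bounded, which is all your argument needs---but this should be said explicitly.
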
 

In Section 2, we shall first take a~closer look at the properties (LD) and (UD) and
then show that our assumptions cover the isotropic unimodal processes considered
in~\cite{mimica-vondracek} and geometric stable processes.
In Section~3, we shall discuss     
some general potential theory of the semigroup~$\mathbbm P$, where, as in \cite{HN-unavoidable},        
at the beginning (LD) and (UD) are  replaced by the   weaker properties $\int_0^1 r^{d-1}g(r)\, dr<\infty$
and  $\lim_{r\to \infty}g(r)=0$.  In Section~4, we prove Theorem~\ref{main},
and the proof of Corollary~\ref{main-corollary} is given in Section~5.

\section{Examples}\label{exa}

Let us first consider an arbitrary  positive decreasing function~$g$ on~$(0,\infty)$ and 
 write down a~few elementary facts justifying, in particular, our statements in Remark~\ref{crucial}. 

Given $R_0\ge 0 $, we say that (LD) holds on $(R_0,\infty)$,  if there  exists  $C\ge 1$ such that  
\begin{equation}\label{gCr}
d      \int_0^r s^{d-1} g(s)\,ds \le C r^d g(r), \qquad \mbox{ for every } r>R_0.
\end{equation} 
Similarly, given $0\le R_1<\infty$, we say that  (UD) holds on $(R_1,\infty)$, if there exist $K>1$    
and $\eta\in (0,1)$ such that 
\begin{equation}\label{gMr}
g(Kr)\le \eta g(r), \qquad\mbox{  for every }r>R_1.
\end{equation} 
 
\begin{lemma}\label{LD} 
\begin{itemize} 
\item[\rm 1.]
If there is  a~function $\vp>0$  on $(0,1)$ with    $\int_0^1 \g^{d-1}\vp(\g) \, d\g<\infty$ and
\[
  g(\g r)\le \vp(\g) g(r), \qquad\mbox{  for all }\g\in (0,1) \mbox{  and } r>0,
\]
then {\rm (LD)} holds on $(0,\infty) $. 
\item[\rm 2.]
Let $f(r):=r^d g(r)$, $R_0\ge 0$,    $\kappa, C\in (0,\infty)$. If  $\int_0^1 s\inv f(s)\,ds<\infty$,   
$f\ge \kappa$ on $(R_0,\infty)$, and 
\[
\int_{R_0}^r s\inv f(s)\,ds \le C f(r),\qquad\mbox{ for every $r>R_0$},
\]
then {\rm(LD)} holds on $(R_0,\infty)$. 
\item[\rm 3.]
If $0<R<R_0$ and   {\rm(LD)} holds on $(R_0,\infty)$,  then {\rm(LD)} holds on~$( R,\infty)$.
\end{itemize} 
\end{lemma}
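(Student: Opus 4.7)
The plan is to handle the three parts separately, each by a small manipulation of the defining integral in (\ref{gCr}).

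For part~1, I would change variables by $s=\gamma r$ in $d\int_0^r s^{d-1}g(s)\,ds$, converting it to $dr^d\int_0^1 \gamma^{d-1} g(\gamma r)\,d\gamma$. The hypothesis $g(\gamma r)\le \varphi(\gamma)g(r)$ then pulls $g(r)$ outside the integral and yields (LD) on $(0,\infty)$ with the explicit constant $C=d\int_0^1 \gamma^{d-1}\varphi(\gamma)\,d\gamma$, which is finite by assumption. This is essentially a one-line computation.

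For part~2, I would use the identity $s^{d-1}g(s)=s^{-1}f(s)$ to rewrite the left-hand side of~(\ref{gCr}) as $d\int_0^r s^{-1}f(s)\,ds$ and split it at $R_0$. The key auxiliary claim is that $M:=d\int_0^{R_0} s^{-1}f(s)\,ds$ is finite. If $R_0\le 1$ this is immediate from the hypothesis $\int_0^1 s^{-1}f(s)\,ds<\infty$. If $R_0>1$, one adds $d\int_1^{R_0} s^{-1}f(s)\,ds$, which is finite because $g$ is decreasing (hence bounded on $[1,R_0]$ by $g(1)<\infty$), so $s^{-1}f(s)\le s^{d-1}g(1)$ there. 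The hypothesis $\int_{R_0}^r s^{-1}f(s)\,ds\le Cf(r)$ controls the upper piece, and the constant $M$ is absorbed into $(M/\kappa)f(r)$ via $f(r)\ge \kappa$, producing (LD) on $(R_0,\infty)$ with constant $dC+M/\kappa$.

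For part~3, the only new case is $r\in(R,R_0]$. Applying (LD) at any fixed point $r_0>R_0$ immediately shows that $M_0:=\int_0^{R_0} s^{d-1}g(s)\,ds$ is finite. Since $g$ is decreasing, for $r\in(R,R_0]$ one has $r^d g(r)\ge R^d g(R_0)>0$, so
\[
d\int_0^r s^{d-1}g(s)\,ds\le dM_0 \le \frac{dM_0}{R^d g(R_0)}\,r^d g(r).
\]
For $r>R_0$ the original inequality is reused, and the final constant is the maximum of the two. The mildest obstacle anywhere in the lemma is the finiteness of $M$ in part~2 when $R_0>1$, which is not directly covered by the hypothesis on $\int_0^1$ and has to be obtained from the local boundedness of the decreasing function $g$ away from its singularity at~$0$.
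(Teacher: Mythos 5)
Your proposal is correct and follows essentially the same route as the paper: the substitution $s=\gamma r$ for part~1, splitting the integral at $R_0$ and absorbing the finite lower piece via $f\ge\kappa$ for part~2 (where you usefully spell out the finiteness of the piece over $(1,R_0)$ that the paper dismisses as clear), and for part~3 bounding the integral over $(0,R_0]$ by a finite constant obtained from (LD) beyond $R_0$ and comparing with $r^dg(r)\ge R^dg(R_0)$ by monotonicity, which is the paper's argument up to cosmetic differences in the constants.
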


\begin{proof} 
1.  For every $r>0$, 
\[
\int_0^r s^{d-1} g(s)\,ds = r^d\int_0^1 \g^{d-1} g(\g r)\,d\g
\le   r^d  g(r) \int_0^1 \g^{d-1}  \vp(\g)\, d\g. 
 \] 

2.  Clearly, $c:=  \int_0^{R_0} s^{d-1} g(s)\,ds<\infty$. For every    $r>R_0$,
\[
    \int_0^r s^{d-1} g(s)\,ds= c  +    \int_{R_0}^r  s\inv  f(s)\,ds  \le  (c\kappa\inv +C)   f(r)=  (c\kappa\inv +C) r^d g(r).
\]

3.  
Let $0<R< R_0<R_1$ and  assume that (\ref{gCr}) holds. Defining $\tilde C:=C(  R_1/R)^d$
we obtain that,  for every $r\in [R,R_0]$, 
\[
       d \int_0^r s^{d-1} g(s) \,ds\le C   R_1^d g(  R_1) =\tilde C  R^d g(  R_1)
     \le \tilde C r^d g(r).
\]
\end{proof}

\begin{lemma}\label{UD} Let $0\le R_1<\infty$.    
\begin{itemize} 
\item[\rm 1.]
If  there is a~function $\vp>0$ on $(R,\infty)$, $R>0$,  with $\lim_{\lambda\to \infty} \vp (\lambda)=0$
and $g(\lambda r)\le \vp (\lambda) g(r)$, for all $\lambda\ge R$ and $r>R_1$,  
then  {\rm (UD)} holds on~$(R_1,\infty)$.
\item [\rm 2.] 
 If $0<R<R_1$ and   {\rm(UD)} holds on $(R_1,\infty)$,  then {\rm(UD)} holds on~$( R,\infty)$.
\item [\rm 3.]
If    {\rm (UD)} holds on $(R_1,\infty)$, then, for every   $\delta>0$,  there exists $K>1$ such that 
$g(Kr)\le \delta g(r)$ for every $r>R_1$. 
\end{itemize} 
\end{lemma}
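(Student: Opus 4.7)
The plan is to handle the three parts in the order (1), (3), (2), since the argument for (2) reuses the iteration idea from (3).

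For (1), I would simply pick $K \ge R$ large enough that $\eta := \vp(K) < 1$, which is possible because $\lim_{\lambda \to \infty} \vp(\lambda) = 0$. Then $g(Kr) \le \vp(K) g(r) = \eta g(r)$ for every $r > R_1$, which is exactly (UD) on $(R_1,\infty)$.

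For (3), I would iterate (UD). Given $K>1$ and $\eta\in(0,1)$ with $g(Kr) \le \eta g(r)$ for $r > R_1$, the fact that $r>R_1$ implies $K^i r > R_1$ for every $i\ge 0$ lets induction on $n$ yield $g(K^n r) \le \eta^n g(r)$ for every such $r$ and $n\ge 1$. Given $\delta>0$, I would then choose $n$ with $\eta^n \le \delta$ and replace $K$ by $K^n$.

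For (2), the idea is to combine monotonicity of $g$ with one application of (UD) to cover the missing interval $(R,R_1]$. I would set $K'$ strictly larger than $KR_1/R$; since $R<R_1$, this forces $K'>K$. For $r>R_1$, monotonicity gives $g(K'r) \le g(Kr) \le \eta g(r)$, so the original $\eta$ still works. For $r\in (R,R_1]$, I would estimate $g(K'r) \le g(K'R)$ and $g(r) \ge g(R_1)$, and since $K'R/K > R_1$, a single application of (UD) gives
\[
g(K'R) = g\bigl(K \cdot (K'R/K)\bigr) \le \eta\, g(K'R/K) \le \eta\, g(R_1),
\]
so $g(K'r) \le \eta g(r)$ on $(R,R_1]$ as well.

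The main obstacle is only the mild bookkeeping in (2): choosing $K'$ large enough that $K'R/K$ lies strictly above $R_1$ so that one application of (UD) bridges from $g(R_1)$ to $g(K'R)$. Parts (1) and (3) are essentially one-line observations.
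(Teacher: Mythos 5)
Your proposal is correct and follows essentially the same route as the paper: part 1 by choosing $K\ge R$ with $\vp(K)<1$, part 3 by iterating (UD) to get $g(K^m r)\le \eta^m g(r)$, and part 2 by enlarging $K$ by roughly the factor $R_1/R$. The only difference is cosmetic: in part 2 the paper avoids your case split by writing, with $c:=R_1/R$ and any $r>R$, the single chain $g(Kcr)\le \eta\, g(cr)\le \eta\, g(r)$, since $cr>R_1$ and $g$ is decreasing.
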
 

\begin{proof}  
1. We take $K\ge R$ such that  $\vp(K) <\eta$. 

2.  Let $c:=R_1/R$,  $r> R$. Then $cr>R_1$ and $g(Kcr)\le \eta g(cr)\le\eta g(r)$. 

3. We choose $m\in\nat$ such that $\eta^m<\delta$ and  replace $K$ by $K^m$.
\end{proof}

If $0<\a<d$ and $g(r)=r^{\a-d}$, then, by  Lemmas \ref{LD} and \ref{UD},  (LD) and (UD) hold on $(0,\infty)$. 
  So our assumptions are satisfied by Brownian motion
and isotropic $\a$-stable processes with $0<\a\le 2$, $\a<d$.

 Let us  observe  next that, more generally,   our assumptions are satisfied by  the isotropic unimodal L\'evy processes 
$\mathfrak X=(X_t, P^x)$ studied in~\cite{mimica-vondracek}, where 
the characteristic function $\psi$ for ~$\mathfrak X$ (characterized by 
$  e^{-t\psi(|x|)}=   E^0 [e^{i\langle x,X_t\rangle}]$,  $t>0$) 
is supposed to satisfy the following \emph{weak lower scaling 
condition}: There exist  $\a>0$ and   $C_L>0$   such that 
\[
\psi(\lambda r)\ge C_L\lambda^\a \psi(r),\qquad \mbox{ for all }\lambda\ge 1\mbox{ and }r>0
\]
(see   \cite[(1.4)]{mimica-vondracek} and the subsequent list of  examples in \cite{mimica-vondracek}). 
Then, by \cite[Lemma~2.1]{mimica-vondracek} (see also \cite[Proposition 1, Theorem~3]{grzywny}), 
 there exists a~constant $C\ge 1$ such that, for all $r>0$ and $0<\g\le 1$, 
\begin{eqnarray} 
\frac{C\inv} {r^{d} \psi(1/r)} \le&g(r)&\le \frac C  {r^{d} \psi(1/r)} ,\label{g-approx}\\[1.5mm]
C\inv \g^{2-d} g(r)\le&g(\g r)&\le C\g^{\a-d} g(r). \label{gg-ineq}
\end{eqnarray} 
By  Lemma \ref{LD},1 and the second inequality of (\ref{gg-ineq}), (UD) holds on $(0,\infty)$.
Replacing, in the first inequality of (\ref{gg-ineq}),  $r$ by $\lambda r$ and $\g$  by $1/\lambda$,
we see that $g(\lambda r)\le C \lambda^{2-d} g(r)$, for all $r>0$ and $\lambda\ge 1$. Hence
(UD) holds on $(0,\infty)$, by  Lemma \ref{UD},1, provided $d\ge 3$. For the case $d\le 2$, see
\cite[Section 6]{mimica-vondracek}. 

Further, since the transition kernels $P_t$ are given by convolution with positive functions
$p_t$ (see, for example, \cite{hawkes}) satisfying $p_s\ast p_t=p_{s+t}$,  $s,t>0$, we have
\[
      G_0=\int_0^\infty p_t\, dt \in \es.
\] 

Moreover,  the separation condition   (1.6)  in
 \cite[Theorem~1.1]{mimica-vondracek} is our separation  condition~(\ref{separation}).

Now let us look at a~subordinate Brownian semigroup, where   $\a\in (0,2]$, $\a<d$, $0<\delta\le 1$,
and the Laplace exponent of the subordinator is 
\[
\phi(\lambda)=
  \ln^\delta(1+\lambda^{\a/2}). 
\]
If $\delta=1$, then, by  \cite[Theorem 3.2 and Remark 3.3]{sikic-song-vondracek}),
\[
       g(r)\approx r^{-d}\ln^{-2} (1/r) \mbox{ as } r\to 0 \und
       g(r)\approx r^{\a-d}         \mbox{ as } r\to \infty,
\]
and (LD) certainly does not hold with $R_0=0$, since, for $r>0$, 
\begin{equation}\label{geo-int}
\int_0^r s\inv \ln^{-2}(1/s)\,ds=\ln^{-1}(1/r). 
\end{equation} 
In the general case\footnote {The author is indebted
to T.~Grzywny for   informations in this case.}, 
 we have $\phi'(\lambda)/\phi^2(\lambda)
\approx \lambda\inv \ln^{-(1+\delta)}(\lambda)$. By  \cite[Proposition~4.5]{kim-mimica}, 
we obtain that
\[
       g(r) \approx r^{-d-2} \phi'(r^{-2})/\phi^2(r^{-2}) \approx r^{-d} \ln^{-(1+\delta)} (1/r) \quad\mbox{ as }r\to 0.     
\]
Further,  by \cite[Theorem 3.3]{rao-song-vondracek}, $g(r)\approx r^{\delta\a-d}$ as $r\to \infty$.
Thus,  by Lemmas~\ref{LD} and~\ref{UD},  our assumptions in Section 1
are satisfied taking any $R_0,R_1\in (0,\infty)$.

\section{Potential theory of  {\boldmath $\mathbbm P$}}\label{potential-theory}

For the moment, let us  assume that the right continuous semigroup $\mathbbm P$ is only sub-Markov
and that, instead of (LD) and (UD), 
\begin{equation}\label{int-infty}
\int_0^1   r^{d-1}g(r)\,dr<\infty \und \lim\nolimits_{r\to \infty} g(r)=0.
\end{equation}

Let $\B(\reald)$ ($\C(\reald)$, respectively) denote the set of all 
Borel measurable numerical functions (continuous real functions, respectively) on $\reald$.
We recall that  the potential kernel $V_0=\int_0^\infty P_t\,dt$ is
given by
\[
     V_0 f(x):=G_0\ast f (x)=\int G_0(x-y) f(y)\,dy,\qquad f\in \B^+(\reald), \, x\in \reald.
\]
Let $\es$ denote the set of all $\mathbbm P$-excessive functions, that is,
$\es$ is the set of all $v\in \B^+(X)$ such that $\sup_{t>0} P_t v=v$. We
note that $V_0(\B^+(\reald))\subset \es$. If $f\in\B^+(\reald)$ 
is bounded and has compact support, then $V_0f\in \C(\reald)$ and 
$V_0f$ vanishes at infinity, by (\ref{int-infty}). This leads to 
the following results   in  \cite[Section 6]{HN-unavoidable}
(for the definition of balayage spaces
and their connection with sub-Markov semigroups see \cite{BH}, \cite{H-course}, or 
\cite[Section 8]{HN-unavoidable}).

\begin{theorem}\label{bal-space}
$(\reald,\es)$ is a~balayage space such that every point in $\reald$ is polar
and Borel measurable finely open sets $U\ne \emptyset$ have strictly
positive Lebesgue measure.
\end{theorem}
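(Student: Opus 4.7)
My plan is to invoke the general theory of balayage spaces attached to sub-Markov semigroups for the first assertion, and then to deduce the polarity and measure-theoretic claims directly from the explicit form $G_0=g(|\cdot|)$.

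For the balayage space assertion, I~would use the criterion from \cite[Section~8]{HN-unavoidable} (see also \cite{BH}, \cite{H-course}): a~right continuous sub-Markov semigroup on $\reald$ yields a~balayage space $(\reald,\es)$ provided that (i)~$V_0f\in \C(\reald)$ and vanishes at infinity for every bounded $f\ge 0$ with compact support, and (ii)~there exists a~strictly positive continuous real-valued $\mathbbm P$-excessive function. Property~(i) is immediate from $V_0f=G_0\ast f$: local integrability of~$G_0$ (provided by $\int_0^1 r^{d-1}g(r)\,dr<\infty$) gives continuity of the convolution via dominated convergence, and $\lim_{r\to\infty}g(r)=0$ gives the vanishing at infinity. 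For~(ii), I~would take $u=V_0\mathbf 1_{B(0,1)}$: it is strictly positive everywhere (since $G_0>0$ on $(0,\infty)$), continuous and bounded by~(i), and lies in~$\es$ as the potential of a~bounded compactly supported function.

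For polarity of a~singleton $\{x\}$, observe that any positive measure carried by $\{x\}$ is a~multiple $c\vx$ with $c\ge 0$, and
\[
V_0(c\vx)(x)=c\,G_0(0)=c\,g(0)=\infty \quad\text{for every } c>0.
\]
Hence no nonzero positive measure on $\{x\}$ has a~locally bounded potential, which is the standard criterion for $\{x\}$ to be polar (its capacity is zero). For the last assertion, fix a~Borel finely open set $U\ne \emptyset$ and $x\in U$. Since $P_t$ is translation invariant and $V_0=\int_0^\infty P_t\,dt$ has a~density with respect to Lebesgue measure~$\lambda$, each $P_t$ is likewise absolutely continuous with respect to~$\lambda$. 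If $\lambda(U)=0$, then $P^x[X_t\in U]=0$ for every $t>0$, so $X_t\in\uc$ for all rational $t>0$ almost surely, and right-continuity of paths forces $T_\uc=0$ $P^x$-a.s.; this contradicts the fact that $U$, being a~fine neighborhood of~$x$, gives $P^x[T_\uc>0]=1$.

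The main obstacle is really step~(i)--(ii): although these criteria look modest, translating them into the full set of balayage space axioms (natural min-stability of~$\es$, richness of potentials, Hunt-type approximation, etc.) is where the substantive work lies; the polarity and positivity clauses are then short consequences of the explicit isotropic form of~$G_0$ and the absolute continuity of the kernels.
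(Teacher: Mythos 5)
The paper offers no argument of its own for this statement: Theorem~\ref{bal-space} is imported wholesale from \cite[Section 6]{HN-unavoidable}, the only input being precisely your observation that, by (\ref{int-infty}), $V_0f=G_0\ast f$ is continuous and vanishes at infinity whenever $f\ge 0$ is bounded with compact support. So for the balayage-space assertion your route matches the paper's; your criterion (i)--(ii) is stated a bit more leanly than what is actually verified in \cite{BH} and \cite[Section~8]{HN-unavoidable} (one also needs the potentials $V_0f$ to be linearly separating, which here comes from the blow-up of $G_0$ at the diagonal), but you explicitly delegate the axiom checking to those references, which is exactly what the paper does. Your polarity sketch is also fine provided the ``standard criterion'' is backed by the Riesz-type representation of Theorem~\ref{real-green}: a nonzero reduced function of a bounded continuous potential onto $\{x\}$ would be a potential carried by $\{x\}$, hence of the form $cG_x$ with $c>0$, which is impossible because $G_x$ is unbounded near $x$ (here $g(0)=\infty$ enters) while the majorizing potential is bounded; that representation is part of the same imported machinery.

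The genuine flaw is in the last clause, in the step ``since $P_t$ is translation invariant and $V_0$ has a density, each $P_t$ is likewise absolutely continuous.'' First, translation invariance of the $P_t$ is not assumed in this paper; only the potential kernel $V_0$ is a convolution kernel. Second, even for true convolution semigroups the implication is false: for uniform translation on $\real$ every $P_t(x,\cdot)$ is a Dirac measure, yet $V_0(x,\cdot)$ has the density $1_{(0,\infty)}(\cdot-x)$; absolute continuity of $\int_0^\infty P_t\,dt$ says nothing about the individual kernels $P_t$. Fortunately the conclusion you need follows at once without this detour: if $U$ has Lebesgue measure zero and $x\in U$, then by Fubini
\[
\int_0^\infty P^x[X_t\in U]\,dt=V_01_U(x)=\int_U G_0(x-y)\,dy=0,
\]
so $P^x[X_t\in U]=0$ for Lebesgue-almost every $t>0$; picking a countable dense set $D$ of such times gives $X_t\in \uc$ for all $t\in D$ $P^x$-almost surely, hence $T_{\uc}=0$ $P^x$-a.s.\ (no appeal to right continuity is needed, and it would not help anyway, since $\uc$ is only finely closed). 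This contradicts the fact that $x$ lies in the fine interior of $U$, i.e.\ $P^x[T_{\uc}>0]=1$. Equivalently: the expected occupation time of $U$ is $V_01_U(x)=0$, while almost surely the occupation time is at least $T_{\uc}>0$. With this repair your argument for the last assertion is correct.
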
 

\begin{remark}\label{ex-process}{\rm
1. There exists a~ Hunt process $\mathfrak X=(\Omega,\mathfrak M, \mathfrak M_t, X_t, \theta_t, P^x)$
 on~$\reald$ with transition  semigroup $\mathbbm P$ (see \cite[IV.7.6]{BH}). 

2. Every open ball $B(x,r)$, $x\in\reald$, $r>0$, is finely dense
in the closed ball~$\ov B(x,r)$ (see \cite[Proposition 6.4]{HN-unavoidable}; the fine topology is the coarsest topology
such that every function in~$\es$ is continuous).
}
\end{remark} 

For every subset $A$ of $\reald$, we have a~reduced function $R_1^A$:
\[
          R_1^A:=\inf\{ v\in \es\colon v\ge 1\mbox{ on } A\}.
\]
Obviously, $R_1^A\le 1$, since $1\in \es$. Hence  $R_1^A=1$ on $A$.
If $A$ is  open, then $R_1^A\in\es$. For a~general subset $A$,
the greatest lower semicontinuous minorant~$\hat R_1^A$  of $R_1^A$
(which is also the greatest finely lower semicontinuous minorant of $R_1^A$) is contained 
in~$\es$. It~is known that $\hat R_1^A=R_1^A$ on $A^c$. 
If~$A$~is  Borel measurable, then, for every $x\in\reald $,
\begin{equation}\label{connection}
  {              R_1^A(x)= P^x[T_A<\infty]}
\end{equation} 
 (see \cite[VI.3.14]{BH}). 
The zero-one law  (\ref{0-1})  will be the key to our proofs of both 
Theorem~\ref{main} and Corollary~\ref{main-corollary}.

\begin{proposition}\label{so-simple}
Suppose that   $\mathbbm P$ is a~Markov semigroup.
Then the  constant function $1$ is harmonic and, for each set $A$ in $\reald$, 
\begin{equation}\label{0-1}
                  R_1^A=1 \quad \mbox{ or }\quad  \inf\nolimits_{x\in\reald} R_1^A(x)=0.
\end{equation} 
\end{proposition}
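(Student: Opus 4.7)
The first assertion is immediate: since $\mathbbm P$ is Markov, $P_t 1=1$ for every $t>0$, so the constant function~$1$ belongs to~$\es$ and is even harmonic. For the dichotomy I argue by contraposition: assume $a:=R_1^A(x_0)<1$ for some $x_0\in\reald$, and show that $m:=\inf_{x\in\reald}R_1^A(x)=0$.

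In view of~\eqref{connection} we may write $a=P^{x_0}[T_A<\infty]$; it is enough to treat Borel sets~$A$, which is all the applications in this paper (countable unions of balls) require. The crucial step is to split $\{T_A<\infty\}$ at a deterministic time $t>0$. On $\{T_A>t\}\in \mathfrak M_t$ one has the path identity $T_A=t+T_A\circ\theta_t$, and hence the Markov property at~$t$ gives
\begin{equation*}
a \;=\; P^{x_0}[T_A\le t]\;+\;E^{x_0}\bigl[\mathbf 1_{\{T_A>t\}}\,R_1^A(X_t)\bigr]
   \;\ge\; P^{x_0}[T_A\le t]\;+\;m\,P^{x_0}[T_A>t],
\end{equation*}
where the inequality only uses $R_1^A\ge m$. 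Rewriting the lower bound as $m+(1-m)\,P^{x_0}[T_A\le t]$ and letting $t\to\infty$, monotone convergence forces $P^{x_0}[T_A\le t]\nearrow a$, so $a\ge m+(1-m)a$, i.e.\ $m(a-1)\ge 0$. Since $a<1$, this yields $m=0$.

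Nothing in this argument looks really delicate: the whole proof of the dichotomy is a single application of the Markov property followed by a passage to the limit $t\to\infty$. The only point which calls for a line of explanation is the identity $T_A=t+T_A\circ\theta_t$ on $\{T_A>t\}$ underlying the decomposition, but for a right-continuous Hunt process and a Borel set $A$ this is standard; the extension to arbitrary sets $A$ in the statement can be handled by replacing $A$ with a fine Borel envelope for which $\hat R_1^A$ coincides with $R_1$ of the envelope, so the above reasoning still applies.
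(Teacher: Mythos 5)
Your proof of the dichotomy is correct, but it takes a genuinely different route from the paper. The paper handles Proposition \ref{so-simple} by citing \cite{HN-unavoidable}, and the argument it sketches right after the proof is purely analytic: with $\gamma:=\inf_{x}R_1^A(x)$ and $R_1^A\in\es$, the cone structure of $\es$ and the invariance $P_t1=1$ give $R_1^A=\gamma+(1-\gamma)R_1^A$, hence $\gamma R_1^A=\gamma$, so $\gamma=0$ or $R_1^A=1$; no path properties are used, only that $R_1^A-\gamma$ and $\gamma+(1-\gamma)R_1^A$ are again excessive. Your argument is the probabilistic counterpart: identify $R_1^A$ with the hitting probability via (\ref{connection}), split $[T_A<\infty]$ at a deterministic time $t$ with the simple Markov property, bound $R_1^A(X_t)$ below by $m$, and let $t\to\infty$. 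The computation is sound (conservativeness of $\mathbbm P$ ensures $X_t$ is defined and $P^{x_0}[T_A>t]=1-P^{x_0}[T_A\le t]$), and what it buys is independence from the balayage-space machinery; what the paper's analytic route buys is that it works for any set $A$ with $R_1^A\in\es$ and needs no measurability of $A$ at all.

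Two caveats. First, the proposition is stated for arbitrary subsets of $\reald$, while (\ref{connection}) and the path identity $T_A=t+T_A\circ\theta_t$ require $A$ Borel; your ``fine Borel envelope'' remark is too vague to close this. The clean reduction is outer regularity by open sets: excessive functions are lower semicontinuous, so for $v\in\es$ with $v\ge1$ on $A$ and $\varepsilon\in(0,1)$ the set $U:=\{v>1-\varepsilon\}$ is an open superset of $A$ with $R_1^U\le(1-\varepsilon)^{-1}v$; hence $R_1^A(x_0)=\inf\{R_1^U(x_0)\colon U\supset A\text{ open}\}$, and if $R_1^A(x_0)<1$ you may pick an open $U\supset A$ with $R_1^U(x_0)<1$, run your argument for $U$, and conclude from $R_1^A\le R_1^U$. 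Second, the harmonicity of the constant $1$ is not quite ``immediate'': $P_t1=1$ gives $1\in\es$ at once, but harmonicity additionally requires that the process leaves relatively compact open sets almost surely, which is why the paper invokes \cite{HN-unavoidable} rather than dismissing it in a clause; your dichotomy argument only uses conservativeness, so nothing downstream breaks, but that first claim deserves either a reference or a short transience argument.
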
 

\begin{proof} Having $P_t1=1$, for every $t>0$, we know, by \cite[Theorem 6.3,4]{HN-unavoidable}), 
that $1$ is harmonic. Moreover, by \cite[Proposition 2.3,1]{HN-unavoidable}), (\ref{0-1}) holds.
\end{proof} 

To illustrate that (\ref{0-1}) is  almost trivial, let us suppose that $R_1^A\in \es$
(which is true in our applications) and let $\g:=\inf_{x\in\reald} R_1^A(x)$.  
 Since $\es$~is a~cone, we  trivially  have $R_{1-\g}^A=(1-\g)R_1^A\in \es$. 
So $v:=\g +R_{1-\g}^A\in \es$ and $v= 1$ on~$A$, hence $v\ge R_1^A$. Moreover, $w:=R_1^A-\g\in \es$ and $w= 1-\g$ on $A$,
hence $w\ge R_{1-\g}^A$. Therefore $R_1^A=v=\g+R_{1-\g}^A=\g+(1-\g) R_1^A$.
Thus $\g R_1^A=\g$, that is, $\g=0$ or $R_1^A=1$.

For all $x,y\in \reald$, let    
\[
    G_y(x):=  G(x,y):= G_0(x-y),
\]
and let us recall that, by definition, a~potential is a~positive superharmonic 
function with greatest harmonic minorant~$0$. The next result is 
essentially \cite[Theorem 6.6]{HN-unavoidable}.

\begin{theorem}\label{real-green}
\begin{itemize}
\item[\rm 1.]
The function $G$ is symmetric and   continuous. 
\item[\rm 2.]
For every $y\in\reald$, $G_y$ is a~potential  with superharmonic support~$\{y\}$.
\item[\rm 3.]
If $\mu$ is a~measure on $\reald$ with compact support, then 
$
G\mu:=\int G_y\,d\mu(y)
$
is a~potential,  and the support of~$\mu$ is the superharmonic support of~$G\mu$. 
\item[\rm 4.]
For every potential $p$ on $\reald$, there exists a~{\rm(}unique{\rm)} measure $\mu$ on~$\reald$
such that $p=G\mu$. 
\end{itemize} 
\end{theorem}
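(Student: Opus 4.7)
The plan is to verify the four assertions in turn; all are standard facts about translation-invariant Green kernels within the balayage-space framework supplied by Theorem~\ref{bal-space}, and the entire package is essentially~\cite[Theorem~6.6]{HN-unavoidable}. My sketch just organizes the ingredients.

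Part~1 is the warm-up. Symmetry is immediate from $G(x,y)=g(|x-y|)=g(|y-x|)=G(y,x)$. For continuity into~$[0,\infty]$, I would combine the fact that $G_0\in\es$ is lower semicontinuous with the structural property that $V_0 f$ is continuous for every bounded~$f$ with compact support (ensured by~(\ref{int-infty})); the two together, in conjunction with monotonicity of~$g$, pin down $g$ as continuous on~$(0,\infty)$, and $g(0)=+\infty$ makes $G$ continuous as a map to~$[0,\infty]$.

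For parts~2 and~3 my strategy is the classical approximation of point masses by uniform densities. Let $\lambda_{y,r}$ denote normalized Lebesgue measure on~$B(y,r)$. Then $V_0\lambda_{y,r}$ is a continuous excessive function bounded by $C_G g(r)$ (Remark~\ref{crucial}) and harmonic on $\reald\setminus\ov B(y,r)$, because a convolution potential of a compactly supported density is harmonic off the support of that density by translation invariance and the balayage-space characterization of harmonicity. Letting $r\downarrow 0$, these potentials increase to~$G_y$ on $\reald\setminus\{y\}$, so $G_y$ is excessive and harmonic there; vanishing at infinity together with the minimum principle of the transient balayage space then forces the greatest harmonic minorant of~$G_y$ to be~$0$, so $G_y$ is a potential, and its superharmonic support cannot be smaller than~$\{y\}$ because $G_y(y)=\infty$. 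Part~3 is then obtained by integration: $G\mu$ is excessive as a positive integral of the~$G_y$'s, harmonic on $\reald\setminus\text{supp}(\mu)$, and vanishes at infinity since $\text{supp}(\mu)$ is compact and $g(r)\to 0$; the superharmonic support equals $\text{supp}(\mu)$ by standard localization (any open $U$ meeting $\text{supp}(\mu)$ carries $\mu|_U\ne 0$, and the potential $G(\mu|_U)$ cannot be harmonic throughout~$U$).

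For part~4 I would appeal to Riesz representation of potentials in the balayage space $(\reald,\es)$: every potential~$p$ admits a unique representing measure $\mu_p$ concentrated on its superharmonic support. To identify $\mu_p$ as the measure yielding $p = G\mu_p$, one compares with the explicit continuous potentials $V_0 f = G\ast f$ constructed above, which form a sufficiently rich class (continuous, vanishing at infinity, superharmonic support equal to $\text{supp}(f)$) to determine the measure by duality. The main obstacle is not a single conceptual step but the careful identification of the abstract balayage-space Green kernel with our convolution kernel~$G$; this is precisely the technical content packaged into~\cite[Theorem~6.6]{HN-unavoidable}, on which I would rely.
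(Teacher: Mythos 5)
The paper gives no proof of this theorem beyond the remark that it is essentially \cite[Theorem~6.6]{HN-unavoidable}, and since your argument also rests on that reference in the end, your overall route coincides with the paper's. However, the portions you argue yourself contain two genuine flaws. First, your derivation of continuity in part~1 does not work as stated: lower semicontinuity of the radially decreasing function $G_0=g(|\cdot|)$ only forces \emph{right}-continuity of $g$ (approaching the sphere $\{|x|=r\}$ from outside gives $g(r+)\ge g(r)$, while approaching from inside gives nothing beyond $g(r-)\ge g(r)$, which holds anyway), and continuity of $V_0f=G_0\ast f$ for bounded, compactly supported $f$ carries no information about jumps of $g$, since convolution of a locally integrable kernel with such a density smooths out any jump. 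So your three ingredients are perfectly compatible with a decreasing, right-continuous $g$ having a jump, and they cannot ``pin down'' continuity. What one really needs here is the \emph{fine} continuity of the excessive function $G_0$ combined with the fine density of open balls in the corresponding closed balls (Remark~\ref{ex-process},2): a point $x$ with $|x|=r$ is a fine limit point of $B(0,r)$, where $G_0\ge g(r-)$, so fine continuity forces $g(r-)=g(r)$, and together with right-continuity this yields continuity of $g$, hence of $G$.

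Second, in your approximation step for parts~2 and~3, the claim that the potentials $G\lambda_{B(y,r)}$ \emph{increase} to $G_y$ on $\reald\setminus\{y\}$ as $r\downarrow 0$ is unjustified: for a general decreasing $g$ the Lebesgue averages of $G_x$ over Euclidean balls need not be monotone in the radius (these are not harmonic-measure averages for the process at hand), and pointwise convergence is only guaranteed at points where $g$ is continuous at $|x-y|$ -- which, before part~1 is settled, is only known off a null set of radii. This can be repaired (excessiveness of $G_y$ is in any case part of the standing hypotheses by translation invariance, and convergence along suitable sequences $r_n\downarrow0$ suffices for the harmonicity off $\{y\}$ and the potential property), but as written the step would fail. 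Neither flaw affects your part~4 or the statement itself, which you are entitled to quote from \cite[Theorem~6.6]{HN-unavoidable} exactly as the paper does; but if the sketch is meant to stand on its own, these two steps need the fine-topology input indicated above.
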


For every ball $B$ let $|B|$ denote the Lebesgue measure of $B$   
 and let~$\lambda_B$ denote normalized Lebesgue measure on $B$
(the measure on $B$ having density $1/|B|$ with respect to Lebesgue measure).

Let us now fix $R_0\ge 0$ and assume that (\ref{g-decay})  holds, that is, for $r>R_0$,
\begin{equation}\label{vg}
       G\lambda_{B(0,r)}(0) = \frac 1{ |B(0,r)|}        \int_{B(0,r)} G_y(0)\, dy \le C _G \,g(r).
\end{equation} 
Then, in fact (see \cite[(6.9)]{HN-unavoidable}), 
\begin{equation}\label{vg-global}
             G\lam_{B(0,r)} \le C_G g(r), \quad \mbox{ for every }r>R_0.
\end{equation} 

Moreover,  since  $g(r/2)\le g$ on $(0,r/2)$ and $d\int_0^{r/2}   s^{d-1}\,ds=(r/2)^d$,    
we see that there exists  $1\le C_D\le 2^d C_G$ such that, for all $r>R_0$,    
\begin{equation}\label{g-doubling}
       g(r/2) \le  C_D g(r)  \qquad \mbox{ (doubling property)}. 
\end{equation}
To simplify our estimates, let us define, once and for all,
     \[
     {  c:=\max\{C_D,C_G\}.}
\]

If $B$ is an open ball, then $R_1^B=R_1^{\ov B}$ is a~continuous potential and, 
by Theorem \ref{real-green}, there exists a~unique measure $\mu$ on $\ov B$,
the \emph{equilibrium measure} for~$B$, such that  $R_1^B=G\mu$.
For  measures $\nu$ on $\reald$, let $\|\nu\|$ denote their total mass.
The following holds (cf.\ \cite[Proposition~6.7]{HN-unavoidable} in the case $r_0=\infty$; 
assuming $r>R_0$ its simple proof carries over word by word).

\begin{proposition}\label{cap}
Let $r>R_0$, $B:=B(0,r)$, and let $\mu$ be the equilibrium measure for $B$. Then
\[
\frac {g(|\cdot|)}{g(r)}\ge  R_1^B\ge c\inv \,  \frac{g(|\cdot|+r)}{g(r)}
\und 
 c\inv\, \frac 1{g(r)}\le \|\mu\|\le c\,\frac1{g(r)}.
\]
\end{proposition}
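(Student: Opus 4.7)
The plan is to establish the four inequalities of Proposition~\ref{cap} by combining the identity $R_1^B=G\mu$, the monotonicity of $g$, the global potential bound (\ref{vg-global}), and the fact that $R_1^B=1$ on $B$ (since $B$ is open, $R_1^B\in\es$ and equals $1$ on $B$, and is moreover continuous by Theorem~\ref{real-green}).

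For the upper bound $R_1^B\le g(|\cdot|)/g(r)$, I would set $v:=g(|\cdot|)/g(r)=G_0/g(r)$; this is a positive constant multiple of the excessive function $G_0$, hence $v\in\es$, and $v\ge 1$ on $B$ since $g$ is decreasing. The infimum defining $R_1^B$ then yields the bound. Evaluating at the origin, where $R_1^B(0)=1$, gives
\[
1=G\mu(0)=\int g(|y|)\,d\mu(y)\ge g(r)\|\mu\|,
\]
because $g(|y|)\ge g(r)$ for every $y\in\ov B$; consequently $\|\mu\|\le 1/g(r)\le c/g(r)$.

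For the lower bound on $\|\mu\|$, I would integrate the identity $R_1^B=1$ on $B$ against $\lam_B$, apply Fubini, and use the symmetry $g(|x-y|)=g(|y-x|)$:
\[
1=\int R_1^B\,d\lam_B=\int\!\!\int g(|x-y|)\,d\mu(y)\,d\lam_B(x)=\int G\lam_B\,d\mu.
\]
By (\ref{vg-global}), $G\lam_B\le C_G g(r)\le cg(r)$ on $\reald$, so the right-hand side is bounded by $cg(r)\|\mu\|$, and $\|\mu\|\ge c\inv/g(r)$ follows. The pointwise lower bound on $R_1^B$ is then almost immediate: for every $x\in\reald$ and every $y\in\ov B$ one has $|x-y|\le|x|+r$, hence $g(|x-y|)\ge g(|x|+r)$; integrating against $\mu$ yields $R_1^B(x)=G\mu(x)\ge g(|x|+r)\|\mu\|\ge c\inv g(|x|+r)/g(r)$.

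The substantive input is the lower bound on $\|\mu\|$, which hinges on the global potential estimate (\ref{vg-global}) supplied by (LD); the other three inequalities are elementary consequences of the monotonicity of $g$ and the equilibrium identity $R_1^B=G\mu$. I do not anticipate a genuine obstacle beyond a clean application of Fubini, which is legitimate because $G\mu=R_1^B\le 1$ is bounded and $\mu$ has finite total mass.
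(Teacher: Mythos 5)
Your proof is correct and is essentially the argument the paper imports from \cite[Proposition 6.7]{HN-unavoidable}: the majorant $G_0/g(r)\in\es$ for the upper bound on $R_1^B$, evaluation of $G\mu=R_1^B$ at the center for $\|\mu\|\le 1/g(r)$, integration of $R_1^B=1$ against $\lam_B$ together with (\ref{vg-global}) for $\|\mu\|\ge c\inv/g(r)$, and $g(|x-y|)\ge g(|x|+r)$ on $\ov B$ for the lower bound on $R_1^B$. All steps are justified (the Fubini step needs only Tonelli, since the integrand is nonnegative), so nothing is missing.
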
 

The following  well known fact will be used in the proofs of   Proposition~\ref{hame}
and Lemma~\ref{shell-1}. By (\ref{connection}),  it is an immediate 
 consequence of the strong Markov property.
For the convenience of the reader we write down its short proof
(a~corresponding argument based on iterated balayage  can be given using \cite[VI.2.9]{BH}).

\begin{lemma}\label{subset-U}
Let  $A$ be a~Borel measurable set in an open set $U\subset \reald$ and $\g>0$ such that  $R_1^A\le \g$ on $U^c$.
Then  $P^x[T_A<T_\uc]\ge R_1^A(x)- \g$, for every $x\in U$.
\end{lemma}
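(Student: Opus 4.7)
The plan is to translate $R_1^A$ into a hitting probability via (\ref{connection}) and then decompose according to whether $A$ is reached before or after exiting $U$, using the strong Markov property of~$\mathfrak X$ at the stopping time~$T_{\uc}$.

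Concretely, I would first invoke (\ref{connection}) to rewrite $R_1^A(x)=P^x[T_A<\infty]$. Fix $x\in U$ and split
\[
\{T_A<\infty\}=\{T_A<T_{\uc}\}\;\sqcup\;\{T_{\uc}\le T_A<\infty\}.
\]
On the second event no point of $A$ has been visited before time $T_{\uc}$, so by the very definition of $T_A$ one has $T_A=T_{\uc}+T_A\circ\theta_{T_{\uc}}$ there. Consequently
\[
\{T_{\uc}\le T_A<\infty\}\subset\{T_{\uc}<\infty\}\cap\{T_A\circ\theta_{T_{\uc}}<\infty\},
\]
and applying the strong Markov property at~$T_{\uc}$ together with (\ref{connection}) gives
\[
P^x[T_{\uc}\le T_A<\infty]\le E^x\bigl[\mathbbm 1_{\{T_{\uc}<\infty\}}\,R_1^A(X_{T_{\uc}})\bigr].
\]

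Since $U^c$ is closed and the paths of~$\mathfrak X$ are right continuous, $X_{T_{\uc}}\in \uc$ on $\{T_{\uc}<\infty\}$; by hypothesis $R_1^A\le \g$ there, so the right-hand side is bounded by~$\g$. Combining with the decomposition yields
\[
P^x[T_A<T_{\uc}]\ge P^x[T_A<\infty]-\g=R_1^A(x)-\g,
\]
as desired. The only delicate point is the identity $T_A=T_{\uc}+T_A\circ\theta_{T_{\uc}}$ on $\{T_{\uc}\le T_A\}$ and the verification that $X_{T_{\uc}}\in\uc$ almost surely; both follow immediately from right continuity of the paths and the definitions of $T_A$ and~$T_{\uc}$, so no genuine obstacle arises.
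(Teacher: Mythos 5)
Your proposal is correct and follows essentially the same route as the paper: decompose $\{T_A<\infty\}$ into $\{T_A<T_{\uc}\}$ and $\{T_{\uc}\le T_A<\infty\}$, rewrite the second event via the shift identity $T_A=T_{\uc}+T_A\circ\theta_{T_{\uc}}$ on $\{T_{\uc}\le T_A\}$, and apply the strong Markov property at $T_{\uc}$ together with $X_{T_{\uc}}\in\uc$ (from right continuity and closedness of $\uc$) and the hypothesis $R_1^A\le\g$ on $\uc$. The only cosmetic difference is that you bound $P^x[T_{\uc}\le T_A<\infty]$ by an inclusion while the paper writes out the exact equality of events before integrating; the content is identical.
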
 

\begin{proof}  Let $\tau:=T_\uc$ and $x\in U$. We obviously have the identity
\[
[T_A<\infty]\setminus [T_A<\tau] =[\tau\le T_A<\infty] = [\tau\le T_A]\cap \theta_\tau\inv ( [ T_A<\infty]).
\]
Since $X_\tau\in \uc$ on  $[\tau<\infty]$,   the strong Markov property yields that
\[
  P^x([\tau<T_A]\cap \theta_\tau\inv  [ T_A<\infty])=\int_{[\tau <T_A]} P^{X_\tau} [T_A<\infty]\,dP^x\le \g,
\]
and hence $P^x[T_A<\infty]-P^x[T_A<\tau]\le \g$.
\end{proof} 

For every $r>0$, we introduce the (closed) shell
\[
           S(r):=\ov B(0,3r)\setminus B(0,r).
\]
The following estimate of the probability for hitting a~shell $S(r)$ before leaving a~ball
 $B(0,Mr)$, $M$ large, will be sufficient for us (see 
  \cite[Lemma~2.2]{mimica-vondracek}, going back to~{\cite[Corollary~2]{grzywny},
for a~much stronger estimate which is used \cite{mimica-vondracek}). 

\begin{proposition}\label{hame}    
Let $r>R_0$, $\eta:=c^{-3}/2$,    $M > 3$, and $g((M-2) r)\le \eta g(r)$.  
Then
\begin{equation}\label{hame-loc}
P^0[T_{S(r)} < T_{B(0,Mr)^c}]  \ge \eta. 
\end{equation} 
\end{proposition}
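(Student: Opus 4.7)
The plan is to apply Lemma \ref{subset-U} with $A := S(r)$ and $U := B(0, Mr)$; since $M > 3$ we have $A \subset U$. It then suffices to bound $R_1^{S(r)}$ from above on $U^c$ and from below at~$0$, and to check that the difference is at least~$\eta$.

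For the upper bound, $S(r) \subset \overline{B(0, 3r)}$ gives $R_1^{S(r)} \le R_1^{\overline{B(0, 3r)}} \le g(|\cdot|)/g(3r)$ by Proposition~\ref{cap}. For $|x| \ge Mr$, monotonicity of $g$ and the hypothesis yield $g(|x|) \le g(Mr) \le g((M-2)r) \le \eta g(r)$, while two applications of (\ref{g-doubling}) produce $g(3r) \ge g(4r) \ge g(r)/c^2$. Combining, $R_1^{S(r)} \le c^2 \eta = 1/(2c)$ on $B(0, Mr)^c$.

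The heart of the proof is the sharp lower bound $R_1^{S(r)}(0) \ge 1/c$, which I would derive from the observation that the equilibrium measure $\mu$ of $\overline{B(0, 3r)}$ is carried by the sphere $\partial B(0, 3r) \subset S(r)$. Indeed, since $\mathbbm P$ is Markov, the constant function $1$ is harmonic by Proposition~\ref{so-simple}; hence $G\mu = R_1^{\overline{B(0, 3r)}} \equiv 1$ is harmonic on $B(0, 3r)$, and Theorem~\ref{real-green}(3) forces $\mathrm{supp}(\mu) \subset \overline{B(0, 3r)} \setminus B(0, 3r) = \partial B(0, 3r) \subset S(r)$. Let $\sigma$ be the equilibrium measure of $S(r)$, so $R_1^{S(r)} = G\sigma$ with $\mathrm{supp}(\sigma) \subset S(r)$. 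Since $G\mu \equiv 1$ on $S(r) \supset \mathrm{supp}(\sigma)$ and $G\sigma \equiv 1$ on $S(r) \supset \mathrm{supp}(\mu)$, the symmetry of $G$ (Theorem~\ref{real-green}(1)) combined with Fubini gives
\[
\|\sigma\| = \int G\mu\, d\sigma = \int G\sigma\, d\mu = \|\mu\| \ge c^{-1}/g(3r),
\]
the last inequality being Proposition~\ref{cap}. As $\mathrm{supp}(\sigma) \subset \overline{B(0, 3r)}$, one concludes $R_1^{S(r)}(0) = \int g(|y|)\, d\sigma(y) \ge g(3r)\|\sigma\| \ge 1/c$.

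Lemma~\ref{subset-U} now yields
\[
P^0[T_{S(r)} < T_{B(0, Mr)^c}] \ge R_1^{S(r)}(0) - \tfrac{1}{2c} \ge \tfrac{1}{c} - \tfrac{1}{2c} = \tfrac{1}{2c} \ge c^{-3}/2 = \eta,
\]
since $c \ge 1$. The main obstacle is the lower bound on $R_1^{S(r)}(0)$: a more direct approach of testing $\sigma$ against the potential $G\lambda_{B(0, 3r)}$ only produces $R_1^{S(r)}(0) \ge (3^d - 1)/(3^d c)$, which is not strong enough to exceed $1/(2c) + \eta$ when $c$ is close to $1$. The duality $\|\sigma\| = \|\mu\|$, made available by recognising $\mu$ as a boundary measure sitting inside $S(r)$, is exactly what provides the clean constant $1/c$ and closes the estimate.
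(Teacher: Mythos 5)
Your argument takes a genuinely different route from the paper's. The paper does not estimate $R_1^{S(r)}$ at all: it picks a single small ball $B:=B(z,r)$ with $|z|=2r$, so $B\subset S(r)$, applies Proposition~\ref{cap} to $B$ to get $R_1^B(0)\ge c^{-1}g(3r)/g(r)\ge c^{-3}=2\eta$ and $R_1^B(y)\le g(|y-z|)/g(r)\le\eta$ for $y\in B(0,Mr)^c$, then uses Lemma~\ref{subset-U} for the ball; since $T_{S(r)}\le T_B$, the estimate transfers. That route needs no equilibrium measure for the shell and never asks for regularity of its boundary.

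Your proof attacks $S(r)$ directly and requires the sharper constant $R_1^{S(r)}(0)\ge 1/c$, which you obtain from the duality $\|\sigma\|=\|\mu\|$. This is a nice observation, but two steps are not available in the paper as stated. First, you take for granted an equilibrium measure $\sigma$ with $\hat R_1^{S(r)}=G\sigma$ carried by $S(r)$: the paper constructs equilibrium measures only for open balls, so you would have to argue that $\hat R_1^{S(r)}$, being an excessive function dominated by the potential $R_1^{\overline B(0,3r)}$, is itself a potential, hence representable by Theorem~\ref{real-green}.4, and that its representing measure sits in $S(r)$. Second, and more substantively, your equality $\int G\sigma\,d\mu=\|\mu\|$ needs $G\sigma=\hat R_1^{S(r)}=1$ on $\mathrm{supp}(\mu)=\partial B(0,3r)$, whereas $\hat R_1^{S(r)}<1$ can occur at irregular points of $S(r)$; you assert "$G\sigma\equiv1$ on $S(r)$" without proof. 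Without this, Fubini only gives $\|\sigma\|=\int G\sigma\,d\mu\le\|\mu\|$, which is the wrong direction. The gap is fillable---for instance by Remark~\ref{ex-process}.2 applied to small balls inside the annulus tangent to $\partial B(0,3r)$, or by the standard facts that irregular points form a polar set and that equilibrium measures do not charge polar sets---but it is a real step that must be supplied. The paper's small-ball device gives weaker but sufficient constants and bypasses all of this, which is exactly why it is preferable here.
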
 

\begin{proof}
We choose $z\in \partial B(0,2r)$ and take $B:=B(z,r)$. Then  $B$ is contained in~$S(r)$.
By Proposition~\ref{cap},  
\begin{equation}\label{r-lower}
              R_1^B(0)\ge c\inv\,\frac{g(|z|+r)}{g(r)}=  c\inv\, \frac{g(3r)}{g(r)}\ge c^{-3}=2\eta,
\end{equation} 
whereas, for every $y\in B(0,Mr)^c$,                
\begin{equation}\label{r-upper}
             R_1^B(y)\le \frac{g(|y-z|)}{g(r)}\le \frac{g((M-2)r)}{g(r)}\le \eta.
\end{equation} 
The proof is finished by Lemma \ref{subset-U}. 
\end{proof} 

The next simple result on  comparison of potentials will be sufficient for us
(see the proof of \cite[Theorem 5.3]{HN-unavoidable} for a~much more
delicate version; cf.\ also the proof of \cite[Theorem 3]{aikawa-borichev}).

\begin{lemma}\label{comparison}
Let $Z\subset \reald$ be finite and $r_z>R_0$, $z\in Z$,  such that, for $z\ne z'$,
 $B(z,r_z)\cap B(z',3r_{z'})=\emptyset$.  Let $w\in  \es$ and, for every $z\in Z$, let 
$\mu_z,\nu_z$ be measures on~$\ov B(z,r_z)$ such that   $G\mu_z\in \C(\reald)$, $G\mu_z\le w$, and $\|\mu_z\|\le \|\nu_z\|$.
Then  $\mu:=\sum_{z\in Z} \mu_z$ and $\nu:=\sum_{z\in Z}\nu_z$ satisfy 
 \begin{equation}\label{comp-mu-nu}
             G\mu\le  w+  c G\nu.
 \end{equation} 
\end{lemma}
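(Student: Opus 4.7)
The plan is to prove \eqref{comp-mu-nu} first on $K:=\bigcup_{z\in Z}\ov B(z,r_z)$ and then extend it to $\reald$ by the domination principle for potentials. The separation $B(z,r_z)\cap B(z',3r_{z'})=\emptyset$ gives $|z-z'|\ge r_z+3r_{z'}>r_z+r_{z'}$ for $z\ne z'$, so the closed balls $\ov B(z,r_z)$, $z\in Z$, are pairwise disjoint and every $x\in K$ lies in a uniquely determined $\ov B(z_0,r_{z_0})$.

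Fixing such an $x$, I would write $G\mu(x)=G\mu_{z_0}(x)+\sum_{z\ne z_0}G\mu_z(x)$ and bound the first summand by $w(x)$ via the hypothesis $G\mu_{z_0}\le w$. For $z\ne z_0$ the triangle inequality combined with the separation yields $|x-z|\ge(r_{z_0}+3r_z)-r_{z_0}=3r_z$, so with $s:=|x-z|$ one has $s-r_z\ge(s+r_z)/2>R_0$. A single application of the doubling property \eqref{g-doubling} then gives
\[
g(s-r_z)\le g\bigl((s+r_z)/2\bigr)\le C_D\,g(s+r_z)\le c\,g(s+r_z).
\]
Since every $y\in\ov B(z,r_z)$ satisfies $|x-y|\in[s-r_z,s+r_z]$, the hypothesis $\|\mu_z\|\le\|\nu_z\|$ yields $G\mu_z(x)\le g(s-r_z)\|\mu_z\|\le c\,g(s+r_z)\|\nu_z\|\le c\,G\nu_z(x)$. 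Summing over $z\ne z_0$ now produces $G\mu\le w+c\,G\nu$ on $K$, and a fortiori on $\operatorname{supp}(\mu)$.

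Because $Z$ is finite and each $G\mu_z\in\C(\reald)$, the potential $G\mu=\sum_{z\in Z}G\mu_z$ is continuous and carried by the compact set $K$, while $w+c\,G\nu\in\es$. The domination principle for potentials in the balayage-space framework then extends the inequality from $K$ to all of $\reald$. The main obstacle is not the pointwise estimate — the factor $3$ in the separation is calibrated precisely so that one doubling step costs only the constant $c$ rather than a power of it — but the clean invocation of this domination principle, which is where the continuity hypothesis $G\mu_z\in\C(\reald)$ plays its essential role.
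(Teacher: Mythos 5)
Your argument is correct and essentially identical to the paper's proof: for $x$ in one ball you bound $G\mu_{z_0}(x)$ by $w(x)$, compare $g(|x-y|)$ over each other ball $\ov B(z,r_z)$ by a single application of the doubling property (the paper does this with arbitrary pairs $y,y'$ in the ball rather than via the center distance $s$, which is the same estimate), use $\|\mu_z\|\le\|\nu_z\|$ to get $G\mu_z(x)\le c\,G\nu_z(x)$, and then pass from the union of the closed balls to all of $\reald$ by the minimum (domination) principle \cite[III.6.6]{BH}, exactly as in the paper. No gaps.
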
 

\begin{proof} 
Let $z,z'\in Z$, $z'\ne z$, and $x\in \ov B(z,r_z)$. For all $y,y'\in \ov B(z',r_{z'})$,  $|y-y'|\le 2r_{z'}\le  |x-y'|$,
 hence $R_0<r_{z'}<|x-y|\le  2|x-y'|$ and  $g(|x-y'|)\le c g(|x-y|)$. By integration, 
$G\mu_{z'} (x)\le c G\nu_{z'}(x)$. Therefore
\begin{equation}\label{comparison-munu}
        G\mu(x)= G\mu_z(x)+\sum\nolimits_{z'\in Z, z'\ne z} G\mu_{z'}(x)\le w(x)+c G\nu(x).
\end{equation} 
Thus $G\mu\le w+c G\nu$ on the union of the balls $\ov B(z,r_z)$, $z\in Z$. By the minimum principle
 \cite[III.6.6]{BH}, the proof is finished.
\end{proof} 

\begin{remark} {\rm If each $G\mu_z$ is only bounded by some potential in $\C(\reald)$, but
there exists $\g>1$ such that  $B(z,\g r_z)\cap B(z', 3 r_{z'})=\emptyset$, 
whenever $z\ne z'$, then (\ref{comparison-munu}) holds for all $x\in B(z,\g r_z)$, $z\in Z$,  and (\ref{comp-mu-nu})
follows as well.
 }
\end{remark}


\section{Proof of Theorem \ref{main}} 

From now on let us suppose that the assumptions introduced at the beginning of  Section 1 are satisfied.
We recall that in many cases (LD) and (UD) hold with $R_0=0$ and $R_1=0$.
If not, we may assume without loss of generality that  $R_0$ and~$R_1$, respectively,
while being strictly positive, are as small as we want.
 
We prepare the proof of Theorem \ref{main} by a~first application of Lemma \ref{comparison}.

\begin{lemma}\label{shell-1}
Let $\rho>\max\{R_0,R_1\}$,  $0<\ve\le 1/4 $. Let  $Z$ be a~finite subset of~$S(\rho)$  and $R_0<r_z\le |z|/4$, $z\in Z$,  such that 
the balls $B(z,4r_z)$ are pairwise disjoint and
\[
 |z-z'|  \ge 4\ve |z| \bigl({g(|z|)}/{g(r_z)}\bigr)^{1/d},\qquad\mbox{ whenever }   z\ne z'.
\]
Let
$           C:=  1+ (4/\ve)^dc^3$, $\delta:=(2C  c^4)\inv$,
$M>4$, and suppose  $g((M-3)\rho)\le \delta g(\rho)$.

 Then the union $A$   of the balls $B(z,r_z)$, $z\in Z$, satisfies
\[
P^x[T_A<T_{B(0,M\rho)^c}]   \ge \delta\sum\nolimits_{z\in Z} g(|z|)/g(r_z), \qquad\mbox{for every $x\in B(0,3\rho)$}.
\]
\end{lemma}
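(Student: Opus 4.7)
Set $t_z := g(|z|)/g(r_z)$, $R_z := \varepsilon|z|\,t_z^{1/d}$, $\rho_z := \max(r_z, R_z)$; write $A := \bigcup_z B(z, r_z)$, $U := B(0, M\rho)$, and $N := \#\{z \in Z : T_{B(z, r_z)} < T_{U^c}\}$. Since $N = 0$ on $\{T_A \ge T_{U^c}\}$, the strong Markov property at~$T_A$ gives
\[
E^x[N] \le \sup_{y \in \overline{A}} E^y[N] \cdot P^x[T_A < T_{U^c}],
\]
so the conclusion reduces to the estimates $E^x[N] \ge (c^{-4}/2) \sum_z t_z$ on $B(0, 3\rho)$ and $\sup_{y \in \overline{A}} E^y[N] \le C$, the quotient being $\delta \sum_z t_z$.

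For the lower bound, Lemma~\ref{subset-U} applied to each $B(z, r_z) \subset U$ reduces matters to pointwise estimates on $R_1^{B(z, r_z)}$. By Proposition~\ref{cap}, $|x - z| + r_z \le 8\rho$, and three doublings ($g(8\rho) \ge c^{-3} g(\rho) \ge c^{-3} g(|z|)$), one gets $R_1^{B(z, r_z)}(x) \ge c^{-4} t_z$ on $B(0, 3\rho)$; on $U^c$, $|y - z| \ge (M-3)\rho$ together with $g((M-3)\rho) \le \delta g(\rho)$ and $g(|z|) \ge c^{-2} g(\rho)$ gives $\sup_{U^c} R_1^{B(z, r_z)} \le c^2 \delta\, t_z$; and $C \ge c^2$ makes $c^2 \delta \le c^{-4}/2$. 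For the upper bound, $E^y[N] \le \sum_z R_1^{B(z, r_z)}(y) = G\mu(y)$ with $\mu := \sum_z \mu_z$ for equilibrium measures $\mu_z$ of $B(z, r_z)$, so one must prove $G\mu \le C$ on $\reald$. The separation condition and the disjointness of $B(z, 4r_z)$ give $|z - z'| \ge 4 \max(\rho_z, \rho_{z'}) \ge \rho_z + 3\rho_{z'}$, so Lemma~\ref{comparison} applies with the~$\rho_z$, $\mu_z$ above, and $\nu_z := \|\mu_z\| \lambda_{B(z, \rho_z)}$: since $G\mu_z \le 1$ one obtains $G\mu \le 1 + c\,G\nu$. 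The enlarged balls $B(z, R_z)$ are pairwise disjoint and contained in $B(0, 3\rho(1 + \varepsilon)) \subset B(0, 4\rho)$, giving $\sum_z R_z^d \le (4\rho)^d$ and hence $\sum_z t_z \le (4/\varepsilon)^d$. In either case $\rho_z = r_z$ or $\rho_z = R_z$ one has $\rho_z^d g(r_z) \ge \varepsilon^d |z|^d g(|z|) \ge c^{-2} \varepsilon^d \rho^d g(\rho)$, so the density of~$\nu$ on its support is at most $c^3/(\omega_d \varepsilon^d \rho^d g(\rho))$; combined with (LD)'s bound on $\int_{B(0, 4\rho)} g(|x-y|)\,dy$ by a~constant multiple of $\omega_d \rho^d g(\rho)$, this yields $c\,G\nu \le (4/\varepsilon)^d c^3$, whence $G\mu \le C$ and the lemma follows.

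The chief obstacle is the global bound $G\mu \le C$: this requires choosing $\rho_z = \max(r_z, R_z)$ so that both $\mu_z$ and $\nu_z$ are supported on $\overline B(z, \rho_z)$ (as required by Lemma~\ref{comparison}), handling both subcases of~$\rho_z$ uniformly, exploiting the separation-based disjointness of $B(z, R_z)$ to obtain the volume estimate, and invoking (LD) to tame the integral of~$g$. The precise cancellation of powers of~$c$ and $\varepsilon^{-d}$ between the density of~$\nu$ and the integral of $g$ is what produces the constant $\delta = (2Cc^4)^{-1}$ rather than something weaker.
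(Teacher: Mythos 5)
Your argument is correct in substance and shares the paper's central mechanism -- the quasi-additivity bound $G\mu\le C$ obtained from Lemma~\ref{comparison} with the enlarged radii $\tilde r_z=\max\{r_z,\ve|z|(g(|z|)/g(r_z))^{1/d}\}$ (your $\rho_z$) and the smeared measures $\nu_z=\|\mu_z\|\lambda_{B(z,\tilde r_z)}$ -- but you package its use differently. The paper converts $G\mu\le C$ into $R_1^A\ge C\inv\sum_z R_1^{B(z,r_z)}$ by the minimum principle and then applies Lemma~\ref{subset-U} once to the union~$A$; you instead introduce the counting variable $N$, use the strong Markov property at $T_A$ to get $E^x[N]\le\sup_{y\in\ov A}E^y[N]\cdot P^x[T_A<T_{B(0,M\rho)^c}]$, bound $E^x[N]$ from below by applying Lemma~\ref{subset-U} ball by ball (with the same Proposition~\ref{cap} estimates, $R_1^{B(z,r_z)}\ge c^{-4}t_z$ on $B(0,3\rho)$ and $\le c^2\delta t_z$ on $B(0,M\rho)^c$, and $c^2\delta\le c^{-4}/2$ since $C\ge c^2$), and bound $E^y[N]\le G\mu(y)\le C$ from above. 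This is a legitimate, slightly more probabilistic variant: it avoids the minimum-principle step for $R_1^A$ (the minimum principle is still used inside Lemma~\ref{comparison}) and, with the right constants, yields exactly $\delta=(2Cc^4)\inv$.

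Two bookkeeping points. First, your intermediate estimates as stated do not deliver $G\mu\le C$: using $\tilde r_z^{\,d}g(r_z)\ge c^{-2}\ve^d\rho^dg(\rho)$ for the density and then bounding the integral of $g$ over $B(0,4\rho)$ by a multiple $c\,4^d$ of $\rho^dg(\rho)$ times the volume of the unit ball gives only $cG\nu\le(4/\ve)^dc^5$, not $(4/\ve)^dc^3$; since the lemma fixes $C=1+(4/\ve)^dc^3$ and $\delta=(2Cc^4)\inv$, you must not discard the factor prematurely: keep $g(4\rho)$ throughout, i.e.\ $\tilde r_z^{\,d}g(r_z)\ge\ve^d\rho^dg(4\rho)$ (no doubling needed, as $|z|\ge\rho$ and $g(|z|)\ge g(4\rho)$), which gives the density bound $c\,(4/\ve)^d g(4\rho)\inv$ relative to $\lambda_{B(0,4\rho)}$, and then $G\nu\le(4/\ve)^d c\,g(4\rho)\inv\,G\lambda_{B(0,4\rho)}\le(4/\ve)^dc^2$, exactly as in the paper. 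Second, (LD) by itself only bounds $\int_{B(0,4\rho)}g(|y|)\,dy$, i.e.\ the potential of $\lambda_{B(0,4\rho)}$ at the center; for arbitrary $x$ you need the translation/rearrangement step (or simply the global estimate (\ref{vg-global}), $G\lambda_{B(0,4\rho)}\le c\,g(4\rho)$ on all of $\reald$), so cite that rather than (LD) directly. With these corrections your proof is complete and gives the stated constant.
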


\begin{proof}  Let $B:=B(0,4\rho)$.  By (\ref{vg-global}), 
\begin{equation}\label{gl-est}
    G\lambda_B\le c  g(4  \rho).
\end{equation} 
For  $z\in Z$, let
\[
     \tilde r_z:=\max\bigl\{r_z, \ve |z| \bigl({g(|z|)} /{g(r_z)}\bigr)^{1/d}\bigr\} 
\]
so that $ B(z,\tilde r_z)\cap B(z',3\tilde r_{z'})=\emptyset$, whenever  $ z\ne z'$.

For the moment,  fix $z\in Z$.  
Since $\max\{r_z,\ve |z|\}\le  |z|/4<\rho$ and \hbox{$g(|z|)/g(r_z)\le 1$}, we know that $B(z,\tilde r_z)\subset B$.
 Moreover, 
\begin{equation}\label{est-tilder} 
\tilde r_z^{-d}\le \ve^{-d}   \,\frac{g(r_z)}{|z|^dg(|z|)}\le \ve^{-d} \,\frac{g(r_z)}{\rho^d g( 4\rho)}.
\end{equation} 
 Let  $\mu_z$ be the equilibrium measure for $B(z,r_z)$,
that is, $G\mu_z=R_1^{B(z,r_z)}$. Then $\|\mu_z\|\le c g(r_z)\inv$, by Proposition \ref{cap}.
We define 
\begin{equation}\label{def-nuz}
                       \nu_z:= \|\mu_z\| \lambda_{B(z,\tilde r_z)}=\b_z 1_{B(z,\tilde r_z)} \lambda_B,
\end{equation} 
where,  by (\ref{est-tilder}),
\begin{equation}\label{est-density}
       \b_z= \|\mu_z\|\, \frac {|B|} {|B(z,\tilde r_z)|} \le  cg(r_z)\inv (4\rho/\tilde r_z)^d  \le (4/\ve)^{d} c g(4\rho)\inv=:\b.
\end{equation} 
Let $\nu:=\sum_{z\in Z} \nu_z$. Since the balls $B(z,\tilde r_z)$, $z\in Z$, 
are pairwise disjoint subsets of~$B$, we conclude,  by
(\ref{def-nuz}),  (\ref{est-density}), and  (\ref{gl-est}),  that
\begin{equation}\label{gnu-est}
                        G\nu \le \b G\lambda_B\le (4/\ve)^d c^2.
\end{equation}
Next let $\mu:=\sum_{z\in Z} \mu_z$ so that 
\[
p:= \sum\nolimits_{z\in Z} R_1^{B(z,r_z)} = G\mu.
\]
By  Lemma \ref{comparison},   $ G\mu\le 1+c G\nu$, and hence  $p\le C$, by (\ref{gnu-est}) and our definition of $C$. 
Therefore, by the minimum principle \cite[III.6.6]{BH},  we obtain that $              C\inv p\le R_1^{\ov A}= R_1^{A}$. 
Trivially,   $R_1^{A}\le p$. Thus
\begin{equation}\label{p-bound} 
              C\inv p\le R_1^{A}\le p.
\end{equation} 

Let $U:=B(0,M\rho)$ and $z\in Z$. By Proposition \ref{cap},  for   $y\in U^c$, 
\[
      g(r_z)    R_1^{B(z,r_z)} (y) \le g(|y-z|)\le g((M-3)\rho)\le \delta g(\rho), 
\]
whereas, for every $x\in B(0,3\rho)$, 
\[
   g(r_z)  R_1^{B(z,r_z)} (x) \ge c\inv  g(|x-z|+\rho)\ge  c\inv g(7\rho)      \ge   c^{-4}  g(\rho).
\]
Defining $\g:=\sum\nolimits_{z\in Z} g(\rho)/g(r_z)$ we hence see,  by (\ref{p-bound}), that
\[
                R_1^{A} \le  \delta \g \on U^c, \qquad \quad R_1^{A} \ge 2\delta \g \on B(0,3\rho).
\]
By Lemma \ref{subset-U}, for every $x\in B(0,3\rho)$,
\[
         P^x[T_{A}<T_{B(0,M\rho)^c}] \ge \delta \g.
\]
Observing that  $g(\rho)\ge g$ on $S(\rho)$  the proof is finished.
\end{proof}

Now let us  fix  a~locally finite subset $Z$ of $\reald\setminus \{0\}$ and $r_z>4R_0$, $z\in Z$,  such that the 
 balls $  B(z,r_z)$ are pairwise disjoint and satisfy the separation condition
(\ref{separation}). 
Let $A$ denote the union of these balls. 
For a~proof of  Theorem \ref{main}  we show the following.

\begin{proposition}\label{only-to-show}
If  $A$ is avoidable,  then $\sum_{z\in Z} {g(|z|)} /{g(r_z)}<\infty$.
\end{proposition}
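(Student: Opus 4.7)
The strategy is to reduce, by translation invariance and the zero-one law (Proposition~\ref{so-simple}), to the case $P^0[T_A<\infty]<\delta_0$ with $\delta_0>0$ as small as we wish, and then to bound the sum by itself through a potential-theoretic comparison that yields a self-absorbing inequality. Translation by some $x_0$ with $R_1^A(x_0)<\delta_0$ preserves the separation condition up to a new positive constant (by the doubling of $g$ and local finiteness of $Z$) and does not affect the finiteness of $\sum_z g(|z|)/g(r_z)$. Moreover, Proposition~\ref{cap} gives $R_1^{B(z,r_z)}(0)\ge c^{-4}$ whenever $r_z\ge |z|/4$, so choosing $\delta_0<c^{-4}$ rules out all such $z$; using local finiteness one may also discard the finitely many $z$ with $|z|\le\rho_0$ for a fixed $\rho_0>\max\{R_0,R_1\}$ chosen below.

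By (UD) and Lemma~\ref{UD},3, pick $N\in\nat$ and set $K:=3^N$ so that $g(Kr)\le \delta_1 g(r)$ on $(R_1,\infty)$ for a small parameter $\delta_1\in(0,1)$. With $\rho_k:=3^k\rho_0$, $Z_k:=Z\cap S(\rho_k)$, and $a_k:=\sum_{z\in Z_k}g(|z|)/g(r_z)$, decompose $Z$ into the disjoint subfamilies $Z^{(j)}:=\bigcup_{i\ge 0} Z_{iN+j}$, $j=0,\ldots,N-1$, according to shell residue. Applying Lemma~\ref{shell-1} to each finite set $Z_k$ (with a fixed $M$ depending only on the constants of the problem, obtained via Lemma~\ref{UD},3) and using $R_1^{A_k}(0)\le R_1^A(0)<\delta_0$ yields the uniform bound $\sup_k a_k\le \delta_0/\delta$, where $\delta$ is the constant from Lemma~\ref{shell-1}.

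Now fix $j$ and, for each $L\in\nat$, set $Z_L:=\bigcup_{i\le L}Z_{iN+j}$, $A_L:=\bigcup_{z\in Z_L}B(z,r_z)$, $p_L:=\sum_{z\in Z_L}R_1^{B(z,r_z)}$, and $S_L:=\sum_{z\in Z_L}g(|z|)/g(r_z)$. The key step will be the bound $p_L\le C_2+c^3 S_L$ on $A_L$ for some fixed $C_2$. For $x\in B(z,r_z)$ with $z\in Z_{iN+j}$ I decompose by shell: the same-shell sum $\sum_{z''\in Z_{iN+j}}R_1^{B(z'',r_{z''})}(x)$ is bounded by the constant $C$ of Lemma~\ref{shell-1}; for an earlier shell $i'<i$, the inequality $|x-z''|\ge \rho_{iN+j}/2$ (using $r_z\le|z|/4$ and $K$ large) combined with the elementary estimate $\sum_{z\in Z_k} 1/g(r_z)\le c^2 a_k/g(\rho_k)$ and the decay $g(\rho_{iN+j})/g(\rho_{i'N+j})\le \delta_1^{i-i'}$ gives, after summing in $i'$ and using $\sup a_k\le\delta_0/\delta$, a geometric series bounded by a constant $C'$; for a later shell $i<i'\le L$, the analogous computation yields a per-shell bound $c^3 a_{i'N+j}$, summing to at most $c^3 S_L$. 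The minimum principle (as in the proof of Lemma~\ref{shell-1}) then extends this to $p_L\le (C_2+c^3 S_L)R_1^{A_L}$ on all of $\reald$.

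Evaluating at $x=0$, Proposition~\ref{cap} (together with $r_z\le|z|/4$ and the doubling property) yields $p_L(0)\ge c^{-2}S_L$, while $R_1^{A_L}(0)\le R_1^A(0)<\delta_0$. Hence $c^{-2}S_L\le \delta_0(C_2+c^3 S_L)$, and choosing $\delta_0<c^{-5}/2$ forces $S_L\le 2c^2\delta_0 C_2$ uniformly in $L$; letting $L\to\infty$ and then summing over $j\in\{0,\ldots,N-1\}$ proves Proposition~\ref{only-to-show}. The main obstacle will be the later-shell contribution to $p_L$, which can only be controlled by a multiple of $S_L$ itself; this forces the self-referential inequality displayed above, and the argument closes only because the zero-one law reduction allows $\delta_0$ to be chosen arbitrarily small.
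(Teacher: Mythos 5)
Your argument is correct in outline, but it takes a genuinely different route from the paper's proof of Proposition~\ref{only-to-show}. Both proofs start the same way (zero-one law via Proposition~\ref{so-simple}, translation so that $R_1^A(0)$ is small, splitting $Z$ into residue classes of shells), but the paper's counting step is probabilistic: Proposition~\ref{hame} gives $P^0[T_{S(\rho)}<\tau']\ge\delta$, the strong Markov property converts the conclusion of Lemma~\ref{shell-1} into $P^0[T_{S(\rho)}<T_A<\tau']\ge(\delta^2/2)\sum_{z\in Z\cap S(\rho)}g(|z|)/g(r_z)$, and these events, for different shells within one residue class, are pairwise disjoint subsets of $[T_A<\infty]$, so the shell sums are summed directly against $P^0[T_A<\infty]<\delta/2$. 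You use neither Proposition~\ref{hame} nor the strong Markov property: you exploit Lemma~\ref{shell-1} only through $R_1^{A_k}(0)\le R_1^A(0)<\delta_0$ (giving the uniform per-shell bound $a_k\le\delta_0/\delta$) and through its internal estimate $p\le C$, and you replace the disjoint-events argument by a quasi-additivity/absorption estimate on finite truncations, in the spirit of the paper's proof of Proposition~\ref{MV-L4.2} and of Aikawa--Borichev -- essentially the ``purely analytic'' variant the paper alludes to after its proof. What the paper's route buys is that its constants need no tuning beyond the lemma itself and the bound $1/\delta$ falls out immediately; what your route buys is a proof by comparison of potentials alone, at the cost that the later-shell term can only be dominated by $c^3S_L$, so the inequality closes only because the zero-one law lets you take $\delta_0<c^{-5}/2$ -- which you correctly identify and which is legitimate.

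Three details to tighten when writing this up. First, Lemma~\ref{shell-1} (and its constant $C$, which is established only inside its proof via Lemma~\ref{comparison}) requires the balls $B(z,4r_z)$ to be pairwise disjoint and the separation constant $4\ve$; as in the paper, replace $r_z$ by $r_z/4$, which is harmless by the doubling property. Second, with $K=3^2$ your earlier-shell distance bound only gives $|x-z''|\ge(5/12)\rho_{iN+j}$, so take $K=3^N$ with $N\ge3$ (consistent with your ``$K$ large''). Third, after translating by $x_0$ you must discard the finitely many $z$ with $|z|$ not large compared with $|x_0|$, so that $g(|z-x_0|)$ is comparable to $g(|z|)$ and the separation condition survives with a new constant, exactly as in the paper's reduction.
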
 

\begin{proof} So let us suppose that $A$ is avoidable. To prove that $\sum_{z\in Z} {g(|z|)} /{g(r_z)}<\infty$
we may  assume that $|z|>8R_0$, for every $z\in Z$  (we simply omit finitely many points from $Z$). 
Further, we may assume   that the balls $B(z, 4r_z)$  are pairwise
disjoint. Indeed,   since 
$g(r)\le g(r/4) \le c^2 g(r)$, $r>R_0$,
 a~replacement of~$r_z$ by~$r_z/4$ does neither affect~(\ref{separation}) nor the convergence of $\sum_{z\in Z} {g(|z|)} /{g(r_z)}$,
 and the new, smaller union is, of course, avoidable.  Moreover, similarly as at  the beginning of the proof of \cite[Theorem 1.1]{mimica-vondracek}),
we may assume without loss of generality that 
\begin{equation}\label{rz}
                          r_z\le |z|/8, \qquad\mbox{ for every }z\in Z.
\end{equation} 
Indeed, replacing $r_z$ by  
$
r_z':=\min\{r_z,|z|/8\}
$
our assumptions are preserved as well.
Suppose we have shown that   $\sum_{z\in Z} {g(|z|)} /{g(r_z')}<\infty$.
 Since $g(|z|)/g(|z|/8)\ge  c^{-3}$,  
  we see that the set $Z'$ of all $z\in Z$ such that $r_z'=|z|/8$ is finite, and hence certainly 
$\sum_{z\in Z'} g(|z|)/g(r_z)<\infty$. So  we may assume without loss of generality 
that $r_z'=r_z$, for all $z\in Z$, that is,  (\ref{rz}) holds.

 By (\ref{separation}),   we may choose $0<\ve< 1/4$ such that,  for  $z,z'\in Z$,  $ z\ne z'$,
\begin{equation}\label{strong-sep}
  |z-z'|  \ge 8c^{1/d}\ve |z| \bigl({g(|z|)}/{g(r_z)}\bigr)^{1/d}. 
\end{equation} 
As in Lemma \ref{shell-1}, we define
\[ 
           C:=  1+ (4/\ve)^dc^3, \qquad \delta:=(2C  c^4)\inv.
\] 
By  Lemma \ref{UD},  there exists  $M:=3^m$, $m\in\nat$, such that
\[ 
g((M-3)\rho)\le \delta g(\rho),\qquad\mbox{ for every }\rho> R_1.
\] 
Moreover, let us define 
\[
R:=1+\max\{R_0,R_1\}.
\]

By Proposition  \ref{so-simple},  there is  a~point $x_0$  in $\reald$
such that 
\begin{equation}\label{start}
           P^{x_0}[T_A<\infty] = R_1^A(x_0)< \delta/2. 
\end{equation} 
Deleting finitely many points from $Z$,                                                  
we obtain $Z\cap B(0,2|x_0|+R)=\emptyset$. Then, for every $z\in Z$,
\begin{equation}\label{Z-modified}
|z|/2 \le |z-x_0|\le 2|z|, \qquad c\inv g(|z|)\le g(|z-x_0|)\le c g(|z|).
\end{equation} 
Hence, by (\ref{rz})  and (\ref{strong-sep}), $r_z<|z-x_0|/4$ and, for   $ z,z'\in Z$, $ z\ne z'$,  
\begin{equation}\label{translation}
|z-z'|\ge 4\ve |z-x_0| \bigl( {g(|z-x_0|)}/{g(r_z)}\bigr)^d. 
\end{equation} 
By translation invariance, we may therefore assume without loss of generality that $x_0=0$, $Z\cap B(0,R)=\emptyset$,
 and  (\ref{translation}) holds instead of~(\ref{strong-sep}).

For every $0\le j<m$, let 
\[
 Z_j:=\bigcup\nolimits_{n=0}^\infty Z\cap  S(3^{nm+j}R). 
\]
Then  $Z$ is the union of $Z_0,Z_1,\dots, Z_{m-1}$. Therefore it suffices to show that
\begin{equation}\label{Z-show}
                \sum\nolimits_{z\in Z_j} g(|z|)/g(r_z) <\infty, \qquad \mbox{ for every } 0\le j<m.
\end{equation} 
So let us fix $0\le j< m$.  For the moment, we also fix  $n\in \{0,1,2,\dots\}$ and define
$ \rho:=3^{nm+j}R$,
\[
      S:=T_{S(\rho)} , \quad \tau:=T_{B(0,\rho)^c}, \quad \tau':=T_{B(0,M\rho)^c}, \quad T:=\min\{T_A,\tau'\}.
\]
By  Lemma \ref{shell-1}, 
\begin{equation*} 
 P^y[T_A<\tau']\ge
\delta \sum\nolimits_{z\in Z\cap S(\rho)} g(|z|)/g(r_z), \quad\mbox{ for every }  y\in S(\rho). 
\end{equation*} 
By   Proposition \ref{hame}, $P^0[S<\tau'] \ge \delta$, and hence, by (\ref{start}), 
\begin{equation*} 
    P^0[S<T] \ge P^0[S<\tau']- P^0[T_A<\infty] >\delta/2. 
\end{equation*} 
Clearly, $S+T_A\circ\theta_{S}= T_A$ and $S+\tau'\circ\theta_{S}= \tau'$ on $[S<T] $. Hence
\[
 [S<T_A<\tau']=[S< T, T_A<\tau']=[S< T]\cap \theta_{S}\inv ([T_A<\tau']).
\]
Since $X_{S}\in S(\rho)$ on $[S<\infty]$,   the strong Markov property yields  that 
\[
       P^0[S<T_A<\tau']=\int_{[S<  T]} P^{X_{S}}[T_A<\tau']\,dP^0 \ge
(\delta^2/2)  \sum\nolimits_{z\in Z\cap S(\rho)  }  \frac{g(|z|)}{g(r_z)}.
\]
Of course, $\tau\le S$. Hence the sets $[S<T_A<\tau']$, obtained for different~$n$, are pairwise disjoint
subsets of $[T_A<\infty]$ (recall that $M=3^m$). Thus, by~(\ref{start}),  
\[
\sum\nolimits_{z\in Z_j} g(|z|)/g(r_z)\le (2/\delta^2)   P^0[T_A<\infty] \le 1/\delta.
\]
    \end{proof} 

 Let us note that the preceding proof could also be presented in a~purely analytic way
using iterated balayage of measures.

\section{Proof of Corollary \ref{main-corollary}}

Again we suppose that the assumptions from the beginning of Section 1 are satisfied.
Let $Z$ be a~countable set in $\reald$ and $r_z>0$, $z\in Z$, such that the balls $B(z,r_z)$ 
are pairwise disjoint and regularly located. So there exist $\ve,R\in (0,\infty)$ such that the points
in $Z$ have a~mutual distance which is at least $\ve$ and every open ball of radius $R$
contains some point of $Z$. Moreover, $r_z=\phi(|z|)$, where the function $\phi$ is decreasing.
If (LD) does not hold with $R_0=0$, we assume that $\kappa:=\inf_{x\in \reald}  \phi(x)>0$.
By Lemma~\ref{LD}, we then know that (LD) holds, if we define $R_0:=\kappa/8$.
 By Lemma~\ref{UD},   (UD) holds with, say,  $R_1:=R_0+1$.     

Of course, we may assume   that $R\ge 1+\phi(1)$. 
We already know (see Remark \ref{remark-main},3 and Proposition  \ref{known})      
that it suffices to show that the union~$A$ of all $B(z,r_z)$, $z\in Z$, is unavoidable provided 
\begin{equation}\label{nec}
\sum\nolimits_{z\in Z} g(|z|)/g(r_z)=\infty.
\end{equation} 

So let us suppose that (\ref{nec}) holds. Moreover, let us assume for the moment that  
\begin{equation}\label{sufficient}
\limsup\nolimits_{\rho\to\infty} \rho^dg(\rho)/g(\phi(\rho))<\infty.
\end{equation} 
Then 
$\b:=  \inf\nolimits_{z\in Z} g(r_z)(|z|^d g(|z|))\inv >0$. 
Since $|z-z'|\ge \ve>0$, whenever  $z\ne z'$,  this implies that
\[
           \inf\nolimits_{z,z'\in Z, z\ne z'}  \frac  {|z-z'|^d}{|z|^d}        \, \frac {g(r_z)} {g(|z|)} \ge \ve^d\b.
\]
Hence the balls $B(z,r_z)$, $z\in Z$, satisfy the separation condition (\ref{separation}), and   $A$~is 
unavoidable,  by Theorem \ref{main}. Thus already the following lemma would finish        
the proof of Corollary \ref{main-corollary}.

\begin{lemma}\label{sufficient-rho}
If $\limsup_{\rho\to\infty} \rho^dg(\rho)/g(\phi(\rho))=\infty$, the set $A$ is unavoidable.
\end{lemma}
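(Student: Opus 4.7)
My plan is to show that $R_1^A(x)\ge \sigma$ for every $x\in\reald$, with $\sigma>0$ a universal constant; combined with the zero--one law of Proposition~\ref{so-simple}, this forces $R_1^A\equiv 1$, so $A$ is unavoidable. Unlike the proof of Proposition~\ref{only-to-show}, no translation or contradiction argument is required, because a single well-chosen shell already yields a lower bound on $R_1^A$ valid at every starting point.

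Fix $x\in\reald$. Using the hypothesis, pick $\rho_n\to\infty$ with $\rho_n^d g(\rho_n)/g(\phi(\rho_n))\to\infty$, so that $x\in B(0,3\rho_n)$ for all large~$n$. Inside the shell $S(\rho_n)$ I want to construct a finite $Z_n\subset Z\cap S(\rho_n)$ and a common radius $r_n^*$ satisfying the hypotheses of Lemma~\ref{shell-1}---namely $R_0<r_n^*\le |z|/4$ for every $z\in Z_n$, the balls $B(z,4r_n^*)$ pairwise disjoint, the separation $|z-z'|\ge 4\ve|z|(g(|z|)/g(r_n^*))^{1/d}$, and $r_n^*\le r_z$ so that $A_n:=\bigcup_{z\in Z_n}B(z,r_n^*)\subset A$---together with the key lower bound
\[
\Sigma_n:=\sum\nolimits_{z\in Z_n}g(|z|)/g(r_n^*)\ge \sigma
\]
uniform in $n$. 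Lemma~\ref{shell-1} then gives $R_1^A(x)\ge R_1^{A_n}(x)\ge \delta\Sigma_n\ge \delta\sigma$, so $\inf_x R_1^A\ge \delta\sigma>0$ and Proposition~\ref{so-simple} concludes.

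For the construction, I would partition $S(\rho_n)$ into a fixed number $L$ of dyadic sub-shells $[3^{k/L}\rho_n,3^{(k+1)/L}\rho_n]$ and choose the sub-shell $k$ that carries the largest share of the total weight. Setting $r_n^*:=\phi(3^{(k+1)/L}\rho_n)$ automatically ensures $r_n^*\le r_z$ on the chosen sub-shell, since $\phi$ is decreasing. I take $Z_n$ to be a maximal $d_n$-separated subset of $Z$ inside the sub-shell, with $d_n:=c_0\rho_n(g(\rho_n)/g(r_n^*))^{1/d}$ and $c_0$ large enough so that the separation of Lemma~\ref{shell-1} holds (using $|z|\asymp \rho_n$, $g(|z|)\asymp g(\rho_n)$ by doubling of~$g$). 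The $R$-density of $Z$ and a volume-packing estimate yield $|Z_n|\gtrsim \rho_n^d/\max\{d_n,\ve\}^d$. If $d_n\ge \ve$, the product $|Z_n|\cdot g(\rho_n)/g(r_n^*)$ collapses to a positive universal constant; if $d_n<\ve$, one uses every available point of $Z$ in the sub-shell and appeals to the $L$-fold pigeonhole to derive $\Sigma_n\ge \sigma$. Disjointness of the enlarged balls and the inequality $r_n^*\le|z|/4$ are secured by shrinking $r_n^*$ by a bounded universal factor if necessary.

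The main obstacle is the construction just sketched: the hypothesis controls only $g(\rho)/g(\phi(\rho))$, whereas the separation condition of Lemma~\ref{shell-1} demands a common radius no larger than $\phi(3\rho_n)$, so $g(r_n^*)$ may be much larger than $g(\phi(\rho_n))$ if $\phi$ drops sharply across $S(\rho_n)$, collapsing the usable ratio $g(\rho_n)/g(r_n^*)$. The dyadic sub-shell partition is precisely what handles this: on each sub-shell, $\phi$ varies by a factor depending only on $L$, and the doubling of $g$ converts the given hypothesis on $g(\phi(\rho_n))$ into the required uniform lower bound on $\Sigma_n$.
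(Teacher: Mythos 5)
There is a genuine gap at the heart of the construction. Your common radius is $r_n^*:=\phi(3^{(k+1)/L}\rho_n)$, i.e.\ $\phi$ evaluated at a point \emph{beyond} $\rho_n$, while the hypothesis only controls $g(\phi(\rho_n))$. Since $\phi$ is merely assumed decreasing --- there is no doubling or modulus-of-continuity assumption on it --- your claim that ``on each sub-shell, $\phi$ varies by a factor depending only on $L$'' is unjustified: $\phi$ may drop by an arbitrarily large factor across an arbitrarily thin annulus just outside $|x|=\rho_n$. In that case $g(r_n^*)\ggg g(\phi(\rho_n))$, the usable ratio $\rho_n^d g(\rho_n)/g(r_n^*)$ can stay bounded (or tend to $0$) even though $\rho_n^d g(\rho_n)/g(\phi(\rho_n))\to\infty$, and neither the doubling of $g$ (which only compares $g(r)$ with $g(r/2)$) nor the $L$-fold pigeonhole rescues the uniform bound $\Sigma_n\ge\sigma$: every sub-shell of $S(\rho_n)=\ov B(0,3\rho_n)\setminus B(0,\rho_n)$ has outer radius $>\rho_n$, so the same collapse occurs on each of them. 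The root cause is that you placed the shell \emph{outside} $B(0,\rho_n)$: for $|z|\ge\rho_n$ one only gets the upper bound $g(|z|)/g(r_z)\lesssim g(\rho_n)/g(\phi(\rho_n))$, not a lower bound.

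The paper's proof (Proposition~\ref{MV-L4.2}, which even assumes only $\limsup>\eta$) turns the monotonicity of $\phi$ to its advantage by working \emph{inside} $B(0,\rho)$: the auxiliary centers $z_j\in Z$ are chosen in $\ov B(0,\rho/2)\setminus B(0,\rho/4)$ near a $3R$-net, and the common radius is $r=\phi(\rho)\le\phi(|z_j|)=r_{z_j}$, so $B(z_j,r)\subset A$ and each ball contributes at least $c^{-1}g(\rho)/g(\phi(\rho))$ at $x$; with $m\gtrsim(\rho/R)^d$ balls this gives $p(x)\gtrsim\g:=\rho^dg(\rho)/g(\phi(\rho))$, while Lemma~\ref{comparison} plus the minimum principle give $p\le 1+cb\g$ and hence $R_1^A(x)\ge a\g/(1+cb\g)$, uniformly in $x$; the zero--one law finishes. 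Note also that the paper does not route this through Lemma~\ref{shell-1} at all (whose separation hypothesis the $4R$-separated centers would not satisfy when $\g$ is large); it uses the quasi-additivity argument directly. Your overall scaffolding (a single shell, a uniform lower bound on $R_1^A$ on $B(0,3\rho_n)$, then Proposition~\ref{so-simple}) can be repaired by relocating the shell to, say, $\ov B(0,\rho_n)\setminus B(0,\rho_n/3)$ and taking $r^*:=\phi(\rho_n)$, after which your packing count does yield $\Sigma_n\gtrsim 1$; but as written the argument fails, and minor points (verifying $r_n^*>R_0$, disjointness of the enlarged balls, and what ``total weight'' means in the pigeonhole step) would also need attention.
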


For L\'evy processes considered in \cite{mimica-vondracek}, this is \cite[Lemma 4.2] {mimica-vondracek}. 
However, its proof (by contradiction) is almost as involved as the proof of \cite[Theorem~1.1]{mimica-vondracek}.

By Proposition  \ref{so-simple},  we only have to show that $\inf_{x\in \reald} R_1^A(x)>0$. Hence
a~second application of Lemma \ref{comparison},    which is  yet 
another variation of the arguments for quasi-additivity of capacities in~\cite{aikawa-borichev},
will allow us even to prove  the following. 

\begin{proposition}\label{MV-L4.2}
Suppose that $\limsup_{\rho\to\infty} \rho^dg(\rho)/g(\phi(\rho)) 
>\eta>0$.       
 Then the union $A$ of all $B(z,r_z)$, $z\in Z$, is unavoidable.
\end{proposition}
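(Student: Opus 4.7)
The plan is to use Proposition \ref{so-simple} to reduce to $\inf_{x\in\reald}R_1^A(x)>0$ and then establish this lower bound with a single application of Lemma \ref{comparison} in the quasi-additivity spirit of Aikawa--Borichev. Pick $\rho_n\to\infty$ with $K_n:=\rho_n^dg(\rho_n)/g(\phi(\rho_n))>\eta$. Given $x\in\reald$, choose $\rho=\rho_n$ with $\rho\ge 4|x|$, set $K:=K_\rho$, and consider the shell $Z_\rho:=Z\cap(B(0,\rho)\setminus B(0,\rho/2))$, which by regular location satisfies $|Z_\rho|\gtrsim\rho^d/R^d$. Since $|z-z'|\ge\ve$ and $r_z+r_{z'}\le|z-z'|$ alone do not give the separation required by Lemma \ref{comparison}, I would shrink the radii to $\tilde r_z:=\min\{r_z,D/8\}$, where $D\ge\ve$ is a uniform lower bound on $|z-z'|$ for distinct $z,z'\in Z$; then $\tilde r_z+3\tilde r_{z'}\le D/2<D\le|z-z'|$, and by Lemma \ref{LD},3 we may take $R_0$ so small that $\tilde r_z>R_0$. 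The union $A^*_\rho:=\bigcup_{z\in Z_\rho}B(z,\tilde r_z)$ is contained in $A$.

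For each $z\in Z_\rho$ let $\mu_z$ be the equilibrium measure of $B(z,\tilde r_z)$ and set $\nu_z:=\|\mu_z\|\lambda_{B(z,\tilde r_z)}$; writing $\mu:=\sum\mu_z$ and $\nu:=\sum\nu_z$, Lemma \ref{comparison} with $w=1$ yields $G\mu\le 1+cG\nu$ on $\reald$. Proposition \ref{cap} together with $\tilde r_z\ge\min\{\phi(\rho),D/8\}$ gives $\|\mu\|\gtrsim\rho^d/g(\min\{\phi(\rho),D/8\})$. The key remaining step is the pointwise estimate $\sup_yG\nu(y)\lesssim 1+K$: for fixed $y$, the at most one term with $y\in B(z,\tilde r_z)$ is $\le c^2$ by (\ref{vg-global}), and for $|y-z|\ge 2\tilde r_z$ the doubling property gives $G\lambda_{B(z,\tilde r_z)}(y)\le cg(|y-z|)$, reducing the remaining estimate to $\sum_{z\in Z_\rho}g(|y-z|)/g(\tilde r_z)$. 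The lattice bound $\sum_{z\in Z_\rho}g(|y-z|)\lesssim\rho^dg(\rho)$ (from (LD) and the $\ve$-separation), combined with $1/g(\tilde r_z)\le 1/g(\min\{\phi(\rho),D/8\})$, then yields the claimed $\lesssim K$, the case split $\phi(\rho)\le D/8$ versus $\phi(\rho)>D/8$ being reconciled by $g(\phi(\rho))\le g(D/8)$ in the latter.

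Setting $C^*:=1+c\sup G\nu\lesssim 1+K$, the inequality $G(\mu/C^*)\le 1$ together with $\text{supp}(\mu)\subset\ov{A^*_\rho}$ and the variational characterization of capacity gives $\kap(A^*_\rho)\ge\|\mu\|/C^*$. Integrating $G(x,\cdot)\ge c^{-2}g(\rho)$ against the equilibrium measure of $A^*_\rho\subset B(0,\rho+D/8)$ (using $|x|\le\rho/4$) yields $R_1^{A^*_\rho}(x)\ge c^{-2}g(\rho)\kap(A^*_\rho)$. The two $\rho$-dependent factors cancel to produce a lower bound on $R_1^{A^*_\rho}(x)$ that, after exploiting (LD) to see that $\rho^dg(\rho)$ is bounded away from zero whenever $\phi(\rho)>D/8$, is a positive constant depending only on $\eta,\ve,D$, and the doubling constant, independent of $\rho$ and $x$. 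Hence $\inf R_1^A>0$ and Proposition \ref{so-simple} forces $R_1^A\equiv 1$, proving unavoidability. The main obstacle I anticipate is the pointwise control of $G\nu$: when $y$ lies inside a source ball the near-singular single-ball contribution must be absorbed cleanly via (\ref{vg-global}), and the two regimes $\phi(\rho)\le D/8$ and $\phi(\rho)>D/8$ must be handled so that the cancellation really yields a uniform positive lower bound.
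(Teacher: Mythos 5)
Your overall strategy coincides with the paper's: reduce via Proposition~\ref{so-simple} to showing $\inf_x R_1^A(x)>0$, restrict to a dyadic shell at distance $\asymp\rho$ from~$x$, build a sum $\mu$ of equilibrium measures dominated on the support by a sum $G\nu$ via Lemma~\ref{comparison}, and invoke the minimum principle. The difference is in how $G\nu$ is controlled. The paper \emph{thins} $Z$ to a finite $4R$-separated subfamily $\{z_j\}$ of cardinality $\asymp (\rho/R)^d$ (using regular location property (b)), takes a \emph{common} radius $r:=\phi(\rho)\le R$ for all the selected balls, and spreads each $\nu_j$ over the \emph{large} ball $B(z_j,R)$. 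Then the $B(z_j,R)$ are disjoint subsets of $B(0,\rho)$ with identical densities, so $\nu\le \|\mu_0\|(\rho/R)^d\lambda_{B(0,\rho)}$ and $G\nu\le c^2R^{-d}\,\rho^dg(\rho)/g(r)=b\gamma$ in one line. You instead keep every shell point with a truncated, $z$-dependent radius $\tilde r_z=\min\{r_z,D/8\}$ and spread $\nu_z$ over the \emph{small} ball $B(z,\tilde r_z)$; this forces the pointwise lattice-sum estimate $\sum_z g(|y-z|)\lesssim\rho^dg(\rho)$, a nearest-source case separation, and the split $\phi(\rho)\lessgtr D/8$. Both routes work, but the paper's spreading trick eliminates all of that.

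One genuine subtlety you should tighten: you bound $C^*:=1+c\sup G\nu\lesssim 1+K$ with $K:=\rho^dg(\rho)/g(\phi(\rho))$, but your own lattice argument actually produces the sharper $\sup G\nu\lesssim L:=\rho^dg(\rho)/g(\tilde r)$ with $\tilde r:=\min\{\phi(\rho),D/8\}$ (so $L\le K$). Your final estimate is $R_1^A(x)\gtrsim g(\rho)\|\mu\|/C^*$, with $\|\mu\|\gtrsim (\rho/R)^d/g(\tilde r)$, i.e.\ $R_1^A(x)\gtrsim L/C^*$. If you use $C^*\lesssim 1+K$, you get $L/(1+K)$; in the regime $\phi(\rho)>D/8$ one has $L<K$ and, since $\rho^dg(\rho)$ can be unbounded, also $K$ unbounded, so $L/(1+K)$ is \emph{not} manifestly bounded below. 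You should instead keep $C^*\lesssim 1+L$, yielding $R_1^A(x)\gtrsim L/(1+L)$; since $t\mapsto t/(1+t)$ is increasing, it suffices to bound $L$ below, which you have: $L=K>\eta$ when $\phi(\rho)\le D/8$, and $L=\rho^dg(\rho)/g(D/8)\gtrsim 1/g(D/8)$ when $\phi(\rho)>D/8$, using (LD) to bound $\rho^dg(\rho)$ from below. (Alternatively, keep $1+K$ but note $L/K=g(\phi(\rho))/g(D/8)\ge g(\phi(0))/g(D/8)>0$.) Also make explicit that $\tilde r_z>R_0$: under the standing normalization of Section~5, either $R_0=0$, or $R_0=\kappa/8$ with $\kappa=\inf\phi>0$, and then pairwise disjointness of the balls forces the separation constant $D\ge 2\kappa$, so $\tilde r_z\ge\min\{\kappa,D/8\}\ge\kappa/4>R_0$.
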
 

\begin{proof}             
We define
\[
      a:=  (2c)\inv  (18R)^{-d}  , \qquad b:= c^2 R^{-d},
\]
and  fix $x\in\reald$. 
There exists $\rho> 9R +2|x|+4 R_1$ such that 
\begin{equation}\label{rho-choice}
\g:=\rho^d g(\rho)/g(\phi(\rho))> \eta. 
\end{equation} 
Let
\[
      r:=\phi(\rho),\qquad     B:=B(0,\rho) \und  S:=\ov B(0,\rho/2)\setminus B(\rho/4).
\]
There exist finitely many points $y_1,\dots,y_m\in S$ 
 such that    $B(y_1,3R),\dots, B(y_m,3R)$ are pairwise disjoint and
$S$ is covered by the balls $B(y_1,9R), \dots, B(y_m,9R)$.
Obviously, $m\ge(1/2) (\rho/18R)^d$. There exist points $z_j\in Z\cap B(y_j,R)$, $1\le j\le m$. Then,
for all $i,j\in \{1,\dots, m\}$ with $i\ne j$, $|z_i-z_j|\ge |y_i-y_j|-2R\ge 4R$, and hence
\begin{equation}\label{distance} 
B(z_i,R)\cap B(z_j,3R)=\emptyset.
\end{equation} 
Let  $1\le j\le m$. Clearly, $\rho\ge \rho/2+R\ge |z_j| \ge \rho/4-R\ge R\ge 1$, and hence 
\begin{equation}\label{rzR}
                r=\phi(\rho)\le \phi(|z_j|)=r_{z_j} \le \phi(1)\le R.
\end{equation} 
Moreover, $r+|x-z_j| \le R+|x|+ \rho/2+R\le  \rho$,
and hence $g(|x-z_j|+r)\ge  g(\rho)$. So, by translation invariance and    Proposition~\ref{cap},
\begin{equation}\label{Rzj}
                 R_1^{B(z_j,r)}(x)\ge c\inv g(|x-z_j|+r)/g(r) \ge   c\inv g(\rho)/g(r).
\end{equation} 
Let
\[
         A_x:=B(z_1,r)\cup\dots\cup B(z_m,r) \und  p:=   \sum\nolimits_{j=1}^m R_1^{B(z_j,r)}.
\]
Then $A_x\subset A$, by (\ref{rzR}).  So, by (\ref{Rzj}) and our definitions of $\g$, $a$, and $r$,       
\begin{equation}\label{est-p}
R_1^{A_x}\le R_1^A  \und p(x) \ge m  c\inv g(\rho)/g(r ) \ge   a\g .
\end{equation} 

Now let $\mu_0$ denote the equilibrium measure for $B(0,r)$, 
$G\mu_0=R_1^{B(0,r)}$.  By Proposition \ref{cap}, $\|\mu_0\|\le c g(r)\inv$. We  define
\[
          \nu_j:=\|\mu_0\| \lam_{B(z_j,R)}=\|\mu_0\| (\rho/R)^d  1_{B(z_j,R)} \lambda_B,  \qquad 1\le j\le m,
\]
and $\nu:=\sum_{j=1}^m \nu_j$.
Since   $B(z_1,R), \dots, B(z_j, R)$ are pairwise disjoint subsets of~$B$ and 
$G\lambda_B\le  c g(\rho)$,  we see that
\[ 
       G\nu\le   \|\mu_0\| (\rho/R)^d G\lambda_B \le c^2 R^{-d}g(r)\inv \rho^d g(\rho) = b\g.
\] 
 
For every $1\le j\le m$,  $R_1^{B(z_j,r)}=G\mu_j$, where $\mu_j$ is obtained from~$\mu_0$ translating by $z_j$. 
  Let $\mu:=\sum_{j=1}^m \mu_j$.
By (\ref{distance}), (\ref{rzR}),  and Lemma \ref{comparison}, 
\[
                p=      G\mu\le 1+c G\nu\le 1+c b\g.
\]
Since $\mu$ is supported by $\ov A_x$ and $p$ is continuous, we get that 
\[
             R_1^{A_x}=R_1^{\ov A_x}\ge  (1+c b\g)\inv p, 
\]
by  the minimum principle~\cite[III.6.6]{BH}. In particular, 
\[
           R_1^A(x)\ge  R_1^{A_x}(x)\ge \frac{a\g}{1+cb\g}=\frac a{\g\inv +cb} > \frac a {\eta \inv +c b}\,
\]
by (\ref{est-p}) and (\ref{rho-choice}).  Thus  $A$ is unavoidable, by Proposition  \ref{so-simple}. 
\end{proof}

\bibliographystyle{ws-rv-van}

\end{document}